\numberwithin{equation}{section}
\theoremstyle{plain}
\newtheorem{theorem}{Theorem}[section]
\newtheorem{lemma}[theorem]{Lemma}
\newtheorem{corollary}[theorem]{Corollary}
\numberwithin{equation}{section}
\theoremstyle{definition}
\newtheorem{remark}[theorem]{Remark}
\newtheorem{example}[theorem]{Example}
\DeclareMathOperator{\rank}{rank}
\def\Z{\mathbb{Z}}
\def\Q{\mathbb{Q}}
\def\R{\mathbb{R}}
\def\C{\mathbb{C}}
\def\cF{\cF}
\def\e{\mathbf{e}}
\def\cF{\mathcal{F}}
\def\St{\mathrm{St }}
\def\Lk{\mathrm{Lk }}
\renewcommand{\P}{\mathbb{P}}
\def\k{\mathbf{k}}
\tikzstyle{v}=[circle, draw, solid, fill=black!50, inner sep=0pt, minimum width=4pt]
\tikzstyle{v2}=[circle, draw, solid, fill=black, inner sep=0pt, minimum width=4pt]
\begin{document}
\title[Real toric varieties {of Type $B$}]{The Betti numbers of real toric varieties associated to Weyl chambers of type $B$}

\author{Suyoung Choi}
\address{Department of mathematics, Ajou University, 206, World cup-ro, Yeongtong-gu, Suwon 16499,  Republic of Korea}
\email{schoi@ajou.ac.kr}

\author{Boram Park}
\address{Department of mathematics, Ajou University, 206, World cup-ro, Yeongtong-gu, Suwon 16499, Republic of Korea}
\email{borampark@ajou.ac.kr}

\author[Hanchul Park]{Hanchul Park}
\address{School of Mathematics, Korea Institute for Advanced Study (KIAS), 85 Hoegiro Dongdaemun-gu, Seoul 02455, Republic of Korea}
\email{hpark@kias.re.kr}

\date{\today}
\subjclass[2010]{14M25, 57N65, 17B22, 52B22, 05A15}


\keywords{real toric variety, real toric manifold, Betti number, torsion-free cohomology, root system, Weyl chambers, Type $B$, generalized Euler number, Springer number, shellability}

\maketitle
\begin{abstract}
We compute the (rational) Betti number of real toric varieties associated to Weyl chambers of type $B$. Furthermore, we show that their integral cohomology is $p$-torsion free for all odd primes $p$.
\end{abstract}
\section{Introduction}
 A \emph{toric variety} of complex dimension $n$ is a normal algebraic variety over $\C$ with an effective algebraic action of $(\C \setminus \{O\})^n$ having an open dense orbit. A compact smooth toric variety is called a \emph{toric manifold}.  One of the most important facts on toric geometry is that there is a 1-1 correspondence between the class of toric varieties of complex dimension $n$ and the class of fans in $\R^n$. This fact is called the \emph{fundamental theorem of toric geometry}. In particular, a toric manifold $X$ of complex dimension $n$ {corresponds to} a complete regular fan $\Sigma_X$ in $\R^n$.

    Among toric manifolds, the class of toric manifolds associated to Weyl chambers has been considered since it is introduced by Procesi \cite{Procesi1990}.
    A classical construction associates to each root system a toric manifold whose fan corresponds to the reflecting hyperplanes of the root system and its weight lattice.
    It is natural to ask about the topology of the corresponding toric manifold.
    Note that the integral cohomology of a toric manifold is well-established by Jurkiwicz \cite{Jurkiewicz1980} for the projective cases and by Danilov \cite{Danilov1978} for general cases.
    For a {coefficient field} $\k$, the \emph{$i$th $\k$-Betti number} of a topological space $X$ is the rank of $H^i(X;\k)$ over $\k$, and it is denoted by $\beta^{i}(X;\k)$.
    One remarkable fact is that the Betti numbers of a toric manifold $X$ depend only on the face numbers of its associated fan $\Sigma_X$.
    Especially, the structures of the cohomology of toric manifolds associated to Weyl chambers have been studied by \cite{Procesi1990},  \cite{Stembridge1994}, \cite{Dolgachev-Lunts1994} and \cite{Abe2015}.

    On the other hand, the subset consisting of points with real coordinates of a toric manifold is called a \emph{real toric manifold}.
    Unlike toric manifolds, little is known about the topology of real toric manifolds.
    Let $X$ be a toric manifold and $X^\R$ its real toric manifold.
    By Davis and Januszkiewicz \cite{Davis-Januszkiewicz1991}, the $i$th $\Z_2$-Betti number of $X^\R$ is equal to the $2i$th $\Z$-Betti number of $X$, and, hence, it depends only on the face numbers.
    However, the Betti numbers with rational coefficients are not only determined by the face numbers.
    For instance, both the torus and the Klein bottle are real toric manifolds and their corresponding fans have the face structure combinatorially equivalent to the $4$-gon.
    Hence, their $\Z_2$-Betti numbers are the same while their $\Q$-Betti numbers are different.
    From this sense, the computation of the rational Betti number of real toric manifolds is difficult, and only a few examples have been computed so far.
    One known example is the real toric manifolds associated to Weyl chambers of type $A_n$ due to Henderson \cite{Henderson2012}. Interestingly, {their rational Betti numbers are} the Euler zigzag numbers. Arnol$'$d \cite{Arnold1992} has defined the notion of snake numbers as a generalization of the Euler zigzag numbers as follows: a \emph{snake of type $A_n$} (respectively $B_n$), or an \emph{$A_n$-snake} (respectively, \emph{$B_n$-snake}), is a sequence of integers $x_i$ satisfying the conditions:
      \begin{itemize}
      \item for $A_n$ : $x_0 < x_1 > x_2 < \cdots x_n$, $x_i \neq x_j$ {for $i\ne j$};
      \item for $B_n$ : $0< x_1 > x_2 < \cdots x_n$, $x_i \neq \pm x_j$ {for $i\ne j$};
      \end{itemize}
      where $0 \leq x_i  \leq n$ {for all $i$} for $A_n$, and $1 \leq |x_i | \leq n$ for all $i$ for $B_n$.
    Denote by $a_n$ (respectively, $b_n$) the number of $A_n$-snakes (respectively, $B_n$-snakes).
    The number $a_n$ is also known as the Euler zigzag number (see A000111 of \cite{oeis}), and the number $b_n$ is also known as the generalized Euler number or the Springer number (see A001586 of \cite{oeis}).

\begin{table}[h]
  \centering
    \begin{tabular}{|c|ccccccccccc}
      \hline
      $n$ & 0 & 1 & 2 & 3 & 4 & 5 & 6 & 7 & 8 & 9 &$\cdots$ \\ \hline
      $a_n$ & 1 & 1 & 1 & 2 & 5 & 16 & 61 & 272 & 1385 & 7936& $\cdots$ \\
      $b_n$ & 1 & 1 & 3 & 11 & 57 & 361 & 2763 & 24611 & 250737 & 2873041& $\cdots$ \\
      \hline
    \end{tabular}
  \caption{The list of $a_n$ and $b_n$ for small $n$ }
\end{table}

    The formula of the rational Betti number by Henderson was recovered by Suciu \cite{Suciu2012} later using the general formula for rational Betti numbers of real toric manifolds established by Suciu and Trevisan \cite{ST2012}.
 \begin{theorem}[\cite{Henderson2012}, \cite{Suciu2012}] \label{thm:A_n}
    Denote by $X^\R_{A_n}$ the real toric manifold associated to {the} Weyl chambers of type $A_n$.
     The $k$th $\Q$-Betti number of $X^\R_{A_n}$ is
     $$
        \beta^k (X^\R_{A_n};\Q) = \binom{n+1}{2k}a_{2k}.
     $$
 \end{theorem}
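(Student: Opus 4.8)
\medskip
\noindent\textit{Strategy.}
I would follow the route of Suciu \cite{Suciu2012}, based on the Betti number formula of Suciu and Trevisan \cite{ST2012}. That formula reads as follows: if a real toric manifold $X^\R$ is determined by a simplicial complex $K$ on vertex set $[m]$ together with a $\Z_2$-characteristic matrix $\Lambda$, then
\[
  \beta^k(X^\R;\Q)=\sum_{\omega\in\row(\Lambda)}\dim_\Q\widetilde H^{\,k-1}\!\left(K_\omega;\Q\right),
\]
where $\row(\Lambda)\subseteq\Z_2^m$ is the row space of $\Lambda$ and $K_\omega$ is the full subcomplex of $K$ induced on $\operatorname{supp}(\omega)$. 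So the first step is to read off $K$ and $\Lambda$ for $X_{A_n}$. The fan of $X_{A_n}$ is the type~$A_n$ Coxeter fan, i.e.\ the normal fan of the permutohedron; its underlying simplicial complex $K$ is the barycentric subdivision of $\partial\Delta^n$, that is, the order complex of the poset of proper nonempty subsets of $[n+1]$ ordered by inclusion (a set of rays spans a cone exactly when the corresponding subsets form a chain). The ray through a subset $I$ reduces mod $2$ to $\sum_{i\in I}\mathbf e_i$ modulo $\mathbf e_1+\dots+\mathbf e_{n+1}$, and a short computation then gives
\[
  \row(\Lambda)=\{\,\omega_T : T\subseteq[n+1],\ |T|\ \text{even}\,\},\qquad \operatorname{supp}(\omega_T)=\{\,I : |I\cap T|\ \text{odd}\,\}.
\]

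The second step exploits the $S_{n+1}$-symmetry of the whole picture: $\widetilde H^*(K_{\omega_T};\Q)$ depends only on $|T|$, so with $|T|=2j$ it suffices to understand one full subcomplex $K_{\omega_T}$, which is the order complex $\Delta(P_T)$ of the poset $P_T=\{\,I\subseteq[n+1] : |I\cap T|\ \text{odd}\,\}$ under inclusion. The order-preserving map $f\colon P_T\to Q_{2j}$, $I\mapsto I\cap T$, onto the poset $Q_{2j}$ of odd-cardinality subsets of $T\cong[2j]$, has the property that every lower fiber $f^{-1}\!\left((Q_{2j})_{\le A}\right)$ contains a maximum, namely $A\cup\bigl([n+1]\setminus T\bigr)$; hence Quillen's fiber lemma gives $K_{\omega_T}\simeq\Delta(Q_{2j})$, \emph{independently of $n$}. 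Thus everything reduces to the reduced rational homology of the order complex of the poset of odd-size subsets of $[2j]$.

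The crux is the assertion that $\widetilde H^{i}(\Delta(Q_{2j});\Q)=0$ for $i\ne j-1$ and $\dim_\Q\widetilde H^{\,j-1}(\Delta(Q_{2j});\Q)=a_{2j}$. For the concentration in the top degree $j-1=\dim\Delta(Q_{2j})$ I would prove that $\Delta(Q_{2j})$ is \emph{shellable}: its maximal chains correspond to ordered set partitions of $[2j]$ of block-size type $(1,2,\dots,2,1)$, and one can order these lexicographically into a shelling (equivalently, exhibit an EL-labeling of $Q_{2j}$ with a bottom and a top element adjoined). Shellability both forces the homology into degree $j-1$ and equates $\dim_\Q\widetilde H^{\,j-1}$ with the number of homology facets of the shelling; the remaining step---and the one I expect to be hardest---is a bijection between these homology facets (the falling maximal chains of the EL-labeling) and the $A_{2j}$-snakes, yielding the count $a_{2j}$. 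Alternatively, once concentration is known one may compute $\dim_\Q\widetilde H^{\,j-1}(\Delta(Q_{2j}))=(-1)^{j-1}\mu(\hat0,\hat1)$ for $Q_{2j}$ with a bottom and top adjoined, evaluate this M\"obius number recursively through the intervals below odd-size subsets (whose values come out to be $\pm a_{2m-1}$), and recognize the resulting recursion as the standard Seidel--Entringer--Arnol$'$d recursion for the zigzag numbers, using $\sum_n a_n t^n/n!=\sec t+\tan t$.

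Finally I would assemble: by the two displays, $K_{\omega_T}$ contributes to $\beta^k(X^\R_{A_n};\Q)$ exactly when $k-1=j-1$, i.e.\ when $|T|=2k$, and then it contributes $a_{2k}$; since there are $\binom{n+1}{2k}$ subsets $T\subseteq[n+1]$ with $|T|=2k$ (the summand $k=0$ coming from $T=\emptyset$, where $\widetilde H^{-1}(\emptyset)=\Q$ and $a_0=1$), we obtain $\beta^k(X^\R_{A_n};\Q)=\binom{n+1}{2k}\,a_{2k}$. All the real work sits in the shelling-and-snake-bijection step; the rest is structural.
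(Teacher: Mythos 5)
Your proposal is correct and follows essentially the same route as the paper: the Suciu--Trevisan formula, the parametrization of the row space by even subsets $T\subseteq[n+1]$ and the reduction of each full subcomplex $K_{\omega_T}$ to the ``odd'' complex $\Delta(Q_{2j})=K_{A_{2j-1}}^{odd}$, shellability to get a wedge of $(j-1)$-spheres, and an Euler-characteristic/M\"obius count giving $a_{2j}$ --- exactly the three steps (1)--(3) that the paper outlines. The only divergences are in how the sub-steps are executed (Quillen's fiber lemma in place of the paper's iterated star-deletion argument, and an EL-shelling of the rank-selected Boolean lattice in place of inheriting a shelling from the polytope boundary), both of which are sound.
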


    We note that Choi and Park \cite{CP2} showed that the formula used in \cite{ST2012} works for not only $\Q$ coefficient but also arbitrary field $\k$ coefficient whose characteristic is not equal to $2$. Combining it with \cite{Suciu2012}, we obtain the following.
 \begin{corollary} \label{cor:A_n}
    The integral cohomology of $X^\R_{A_n}$ is $p$-torsion free for all odd primes $p$.
 \end{corollary}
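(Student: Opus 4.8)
The plan is to lift the rational computation behind Theorem~\ref{thm:A_n} to every field of odd characteristic, and then to read off integral torsion-freeness from the universal coefficient theorem. Recall that a real toric manifold $X^{\R}$ is encoded by a simplicial complex $K$ on a vertex set $[m]$ together with a mod-$2$ characteristic matrix $\Lambda$, and the Suciu--Trevisan formula of \cite{ST2012} reads
\[
  \beta^{k}(X^{\R};\Q)=\sum_{\omega\in\row(\Lambda)}\dim_{\Q}\widetilde{H}^{\,k-1}(K_{\omega};\Q),
\]
where $\row(\Lambda)\subseteq\Z_{2}^{m}$ is the row space of $\Lambda$ and $K_{\omega}$ is the full subcomplex of $K$ on $\{\,i:\omega_{i}=1\,\}$. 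By \cite{CP2} the same formula computes $\beta^{k}(X^{\R};\k)$ for any field $\k$ with $\operatorname{char}\k\neq 2$, the only change being that $\widetilde{H}^{\,k-1}(K_{\omega};-)$ is taken over $\k$; in particular $\row(\Lambda)$ and the complexes $K_{\omega}$ are unchanged. So it suffices to show that, for the pair $(K,\Lambda)$ defining $X^{\R}_{A_{n}}$, the numbers $\dim_{\k}\widetilde{H}^{\,k-1}(K_{\omega};\k)$ do not depend on $\k$. This is visible in the analysis of \cite{Henderson2012}, \cite{Suciu2012}: the subcomplexes $K_{\omega}$ that actually contribute can be shown (e.g.\ via shellability) to be homotopy equivalent to wedges of spheres, so their reduced integral homology is free and their reduced Betti numbers agree over every coefficient field. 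Hence the enumeration producing $\binom{n+1}{2k}a_{2k}$ is purely combinatorial, and $\beta^{k}(X^{\R}_{A_{n}};\k)=\binom{n+1}{2k}a_{2k}$ for every field $\k$ with $\operatorname{char}\k\neq 2$.

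Now fix an odd prime $p$ and take $\k=\mathbb{F}_{p}$, so $\beta^{k}(X^{\R}_{A_{n}};\mathbb{F}_{p})=\beta^{k}(X^{\R}_{A_{n}};\Q)$ for all $k$. Since $X^{\R}_{A_{n}}$ is a closed manifold, $H^{\ast}(X^{\R}_{A_{n}};\Z)$ is finitely generated, and the universal coefficient theorem gives
\[
  \dim_{\mathbb{F}_{p}}H^{k}(X^{\R}_{A_{n}};\mathbb{F}_{p})
  =\beta^{k}(X^{\R}_{A_{n}};\Q)+r_{k}^{(p)}+r_{k+1}^{(p)},
\]
where $r_{j}^{(p)}$ is the number of cyclic summands of $p$-power order in $H^{j}(X^{\R}_{A_{n}};\Z)$. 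The equality of the left-hand side with $\beta^{k}(X^{\R}_{A_{n}};\Q)$ forces $r_{k}^{(p)}=0$ for every $k$, i.e.\ $H^{\ast}(X^{\R}_{A_{n}};\Z)$ has no $p$-torsion, which is the assertion.

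The only non-formal ingredient is the middle step: that Suciu's evaluation of $\widetilde{H}^{\ast}(K_{\omega})$ is insensitive to the coefficient field. I expect this to be the main obstacle only in the sense of needing to cite, or re-verify, the freeness of the integral homology (equivalently the shellability) of the relevant induced subcomplexes; granting that, the displayed chain of equalities together with the universal coefficient theorem completes the argument mechanically.
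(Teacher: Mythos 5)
Your argument is correct and follows essentially the same route as the paper: both rest on the facts that the Suciu--Trevisan formula holds over any field of odd characteristic (by \cite{CP2}) and that the relevant subcomplexes $(K_{A_n})_S$ are homotopy equivalent to wedges of spheres, hence have free integral homology and field-independent reduced Betti numbers. The only cosmetic difference is the final formal step: you compare $\mathbb{F}_p$- and $\Q$-Betti numbers via the universal coefficient theorem, whereas the paper invokes the remark that $p$-torsion-freeness of all the pieces $\tilde H^{*}(P_S)$ directly implies $p$-torsion-freeness of $H^{*}(X^\R_{A_n})$.
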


 In this paper, we compute the rational Betti number of real toric manifolds associated to the Weyl chambers of type $B_n$, and show that their integral cohomologies are $p$-torsion free for all odd primes $p$. We prove the following:
    \begin{theorem}
        Denote by $X^\R_{B_n}$ the real toric manifold associated to {the Weyl chambers} of type $B_n$. Then, we have the following:
          \[
            \beta^k (X^\R_{B_n}; \Q) = \binom{n}{2k}b_{2k} + \binom{n}{2k-1}b_{2k-1}.
          \]
        Furthermore, their integral cohomologies are $p$-torsion free for all odd primes $p$.
    \end{theorem}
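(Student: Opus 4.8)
The plan is to follow, in the type-$B_n$ setting, the route that established the type-$A$ result (\cite{Henderson2012,Suciu2012,CP2}). Write $X:=X^\R_{B_n}$ and let $X_{B_n}$ be the complex toric manifold whose real locus is $X$. One first fixes the combinatorial model: the fan of $X_{B_n}$ is the type-$B_n$ Coxeter fan, whose underlying simplicial $(n-1)$-sphere is the Coxeter complex $\mathrm{Cox}_n=\mathrm{sd}(\partial\diamond_n)$, the barycentric subdivision of the boundary of the $n$-dimensional cross-polytope; its vertices are the pairs $(I,\epsilon)$ with $\emptyset\ne I\subseteq[n]$ and $\epsilon\colon I\to\{\pm1\}$, and such pairs span a simplex exactly when the sets form a chain with compatible sign functions. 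The mod-$2$ characteristic function then works out to $\lambda\colon(I,\epsilon)\mapsto\sum_{i\in I}\e_i\in\Z_2^n$ (it does not see the signs), and it is non-degenerate because along a maximal simplex these vectors are unitriangular. Now one invokes the Suciu--Trevisan formula \cite{ST2012}, valid over every field $\k$ with $\operatorname{char}\k\ne2$ by Choi--Park \cite{CP2}:
$$\beta^k(X;\k)=\sum_{\omega\in\Z_2^n}\dim_\k\tilde H^{k-1}\bigl((\mathrm{Cox}_n)_\omega;\k\bigr),$$
where $(\mathrm{Cox}_n)_\omega$ is the full subcomplex on the vertices $v$ with $\langle\lambda(v),\omega\rangle\ne0$. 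Since the vectors $\lambda(v)$ span $\Z_2^n$, the map $\omega\mapsto(\mathrm{Cox}_n)_\omega$ is injective; writing $\omega=\mathbf1_T$ for $T\subseteq[n]$, the sum becomes $\beta^k(X;\k)=\sum_{T\subseteq[n]}\dim_\k\tilde H^{k-1}(K_T;\k)$, with $K_T$ the full subcomplex of $\mathrm{Cox}_n$ on $\{(I,\epsilon):|I\cap T|\text{ odd}\}$.

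The second step removes the dependence on $n$. The rule $(I,\epsilon)\mapsto(I\cap T,\epsilon|_{I\cap T})$ is an order-preserving surjection $f$ from the face poset $\mathcal P_T$ of $K_T$ onto the poset $\mathcal Q_m$ of odd-cardinality signed faces of $\partial\diamond_m$, where $m:=|T|$. For each $q\in\mathcal Q_m$ the preimage $f^{-1}(\{q'\in\mathcal Q_m:q'\ge q\})$ has $q$ (regarded inside $\mathcal P_T$) as its least element and hence is contractible, so Quillen's fiber lemma gives a homotopy equivalence $K_T\simeq L_m:=\Delta(\mathcal Q_m)$, the order complex of $\mathcal Q_m$. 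Thus $K_T$ depends only on $|T|$, and everything reduces to the homotopy type of $L_m$.

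The crux is the \emph{Main Lemma}: $L_m$ is pure shellable of dimension $\lceil m/2\rceil-1$, and a shelling has exactly $b_m$ homology facets (facets whose shelling restriction is the whole facet). Granting it, $L_m$ is homotopy equivalent to a wedge of $b_m$ spheres $S^{\lceil m/2\rceil-1}$, and, being shellable, has $\tilde H_*(L_m;\Z)$ free and concentrated in that degree; hence $\dim_\k\tilde H^{k-1}(L_m;\k)$ is $b_m$ when $m\in\{2k-1,2k\}$ and $0$ otherwise, for \emph{every} $\k$ with $\operatorname{char}\k\ne2$. Summing over $T$ and grouping by $|T|$ gives $\beta^k(X;\k)=\binom n{2k}b_{2k}+\binom n{2k-1}b_{2k-1}$ for all such $\k$ (the second term being $0$ when $k=0$); since this value is independent of $\k$, universal coefficients forces $H^*(X;\Z)$ to be free of $p$-torsion for every odd prime $p$.

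It remains to prove the Main Lemma. Purity and the dimension come from an elementary saturation argument: a maximal chain of $\mathcal Q_m$ must use cardinalities $1,3,5,\dots$ and terminate at $m$ (for $m$ odd) or $m-1$ (for $m$ even). Shellability of $L_m$ can be obtained for free, since $\mathcal Q_m$ is the rank-selected subposet (selecting the odd ranks) of the EL-shellable face lattice of $\diamond_m$, and rank-selection preserves shellability (Björner--Wachs). The real work — which I expect to be the main obstacle — is the count: the number of homology facets equals the rank-selected Möbius invariant $\beta_{\{1,3,5,\dots\}}$ of the face lattice of the cross-polytope, and one must show this equals the Springer number $b_m$. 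I would do this by writing down a suitable EL-labeling, describing a facet of $L_m$ as an ordered set partition of $[m]$ (or of an $(m-1)$-subset, when $m$ is even) into a singleton block followed by $2$-blocks, decorated with a sign on each element, and matching the facets with descending chains — equivalently, with $B_m$-snakes — bijectively; alternatively one computes $\tilde\chi(L_m)$ from the flag $f$-vector of $\diamond_m$ and recognizes the alternating sum via the standard recursion for Springer numbers. Either route completes the proof.
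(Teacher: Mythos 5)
Your overall architecture coincides with the paper's: apply the formula of Theorem~\ref{formula}, show that each $(K_{B_n})_S$ depends only on $m=|S|$ and is homotopy equivalent to the ``odd'' complex $K_{B_m}^{odd}$, prove that complex is shellable (hence a wedge of spheres of dimension $\lceil m/2\rceil-1=\lfloor (m-1)/2\rfloor$), and identify the number of spheres with $b_m$. Two of your intermediate steps are implemented differently from the paper, and both implementations are valid and arguably cleaner. For the reduction to $|S|$, you use the Quillen fiber lemma applied to $(I,\epsilon)\mapsto (I\cap T,\epsilon|_{I\cap T})$, whose fibers $f^{-1}(\mathcal Q_{\geq q})$ have $q$ as a minimum and are therefore cones; the paper (Lemma~\ref{lem:deletion_B}) instead deletes, one at a time and in increasing order of $|I^{\pm}\cap S|$, the stars of vertices with $I^{\pm}\not\subset S$ after checking each link is a cone. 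For shellability, you observe that $K_{B_m}^{odd}$ is the order complex of the odd-rank-selected subposet of the (EL-shellable) face lattice of the cross-polytope and invoke the rank-selection theorem of Bj\"orner--Wachs; the paper (Lemma~\ref{lem:B:shellable}) instead pushes a shelling of the polytopal sphere $K_{B_m}$ down to the subcomplex by restricting facets to odd vertices and discarding repetitions. Your routes buy brevity and rely on standard machinery; the paper's are more self-contained.

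The genuine gap is the enumerative identity you yourself flag as ``the real work'': that the number of homology facets of $L_m$ --- equivalently $|\tilde\chi(L_m)|$, equivalently the rank-selected beta-invariant $\beta_{\{1,3,5,\dots\}}$ of the face lattice of $\diamond_m$ --- equals the Springer number $b_m$. You propose two strategies (a bijection between descending chains of an EL-labeling and $B_m$-snakes, or a flag-$f$-vector computation matched against a recursion for $b_m$) but execute neither, and neither is routine: the first requires exhibiting a specific EL-labeling and a nontrivial bijection, and the second requires an identity that the paper only obtains in Lemma~\ref{lemma:comp_of_mobius} by combining the known type-$A$ M\"obius values $\mu(\emptyset,I)=(-1)^{k+1}a_{2k+1}$ for the lower intervals with a generating-function manipulation relating $e^x(1+i\tan(2ix))$ to $(\cos x-\sin x)^{-1}$. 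Without this computation the theorem's formula is not established; everything else in your proposal (including the deduction of $p$-torsion-freeness from the field-independence of the Betti numbers, via universal coefficients) is sound.
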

{It is worthwhile to note that the same techniques to prove the above theorem do not directly apply to the case of type $C$ and $D$, the other regular types. This is because the analogues for the shellability results like Lemma~\ref{lem:B:shellable} fail for type $C$ or $D$, making it hard to compute homology of the corresponding posets.}

This paper is organized as follows. In Section~2, we introduce preliminary facts including the formula of Suciu-Trevisan to compute the Betti numbers of real toric manifolds and the way to define projective toric manifolds associated to Weyl chambers. In Section~3, we prove the main theorem, that is, we compute the Betti numbers of real toric manifolds of type $B_n$.

\section{Preliminaries}
\subsection{The Betti numbers of real toric manifolds}
In this subsection, we shall introduce a formula of the Betti numbers of real toric manifolds.
    From now on, we restrict our interests in the projective toric manifolds and its real toric manifolds. Let $X$ be a projective toric manifold of complex dimension $n$ and $X^\R$ its real toric manifold. We assume that the associated fan $\Sigma_X$ of $X$ has $m$ rays $r_1, \ldots, r_m$. Then, $\Sigma_X$ can be regarded as a pair of an $(n-1)$-dimensional polytopal simplicial sphere $K$ with the vertex set $[m]=\{1,\ldots, m\}$ and a map $\lambda \colon [m] \to \Z^n$ such that
\begin{itemize}
  \item $\sigma = \{i_1, \ldots, i_\ell \} \in K$ if and only if $\{r_{i_1}, \ldots, r_{i_\ell}\}$ forms a cone in $\Sigma_X$, and
  \item $\lambda(i)$ is the primitive vector in the direction of $r_i$.
\end{itemize}
    We call $K$ the \emph{underlying simplical complex} of $X$ and $\lambda$ the \emph{characteristic map} of $X$. Furthermore, since $X$ is projective, there is a convex simple polytope $P$ with $m$ facets $F_1, \ldots, F_m$ whose face structure is isomorphic to $K$ and the outward normal vector of $F_i$ is $\lambda(i)$ for $i=1, \ldots, m$.

    Similarly to the fundamental theorem for toric geometry, it is known that as a $\Z_2$-space, a real toric manifold $X^\R$ is determined by {the pair $(K,\lambda^\R)$, where $\lambda^\R$ the composition map of $\lambda$ and the canonical quotient map $\Z \to \Z/2\Z$,} i.e., $\lambda^\R \colon [m] \stackrel{\lambda}{\to} \Z \to \Z/2\Z$. We call $\lambda^\R$ the \emph{$\Z_2$-characteristic map}, and we note that $\lambda^\R$ can be represented as  a $\Z_2$-matrix of size $n\times m$, called the \emph{$\Z_2$-characteristic matrix}.
    For each subset $S$ of $\{1,\ldots,n\}$, write $\lambda_S=\sum_{i\in S}\lambda_i$, where $\lambda_i$ is the $i$th row of $\lambda^\R$.
    Let $[m]_S :=\{ j \in [m] \mid  \text{the $j$th entry of $\lambda_S$ is nonzero}\} \subset [m]$.
    For such $S$ we define $K_S:= \{ \sigma \in K | \sigma \subset [m]_S\}$, and, as dual, $\displaystyle P_S:= \bigcup_{j\in [m]_S} F_j$.
    We note that the topological realization of $K_S$ is homotopy equivalent to $P_S$. Throughout this paper, we denote by $K$ the topological
realization of a simplicial complex $K$ if there is no danger of confusion.

    \begin{theorem}\label{formula}\cite{ST2012, CP2}
        Let $X$ be a toric manifold and $X^\R$ its real toric manifold. Let $\k$ be a ring where $2$ is invertible in $\k$. Then the $i$th Betti number $\beta^i (X^\R;\k)$ of $X^\R$ with coefficient $\k$ is given by
        $$
        \beta^i (X^\R;\k) = \sum_{S\subseteq [n]} \rank_{\k} \tilde H^{i-1}(P_S;\k) = \sum_{S\subseteq [n]} \rank_{\k} \tilde H^{i-1}(K_S;\k).
        $$
    \end{theorem}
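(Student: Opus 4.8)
The plan is to realize $\beta^i(X^\R;\k)$ as a sum over the characters of the natural $\Z_2^n$-action on $X^\R$, and to match each character-summand with the relative cohomology of the pair $(P,P_S)$. Since $X$ is projective, $X^\R$ is the standard quotient $X^\R=(\Z_2^n\times P)/\!\sim$, where $(g,p)\sim(h,p)$ exactly when $g-h$ lies in the subgroup $G_p=\langle\lambda^\R(j)\mid p\in F_j\rangle\le\Z_2^n$, and $\Z_2^n$ acts by translation in the first coordinate with orbit map $\pi\colon X^\R\to P$. First I would promote the face structure of $P$ to a $\Z_2^n$-CW structure on $X^\R$: the cells covering the relative interior of a face $f$ form a single orbit with stabilizer $G_f=\langle\lambda^\R(j)\mid F_j\supseteq f\rangle$, so they are indexed by $\Z_2^n/G_f$. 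The crucial observation at this stage is that each generator $\lambda^\R(j)$ of $G_f$ fixes $F_j$, hence $f$, pointwise in $X^\R$; therefore $G_f$ acts trivially on the cell it stabilizes, and there is no orientation twist. Consequently the cellular cochain complex is, in each degree, a genuine permutation module $C^i(X^\R;\k)\cong\bigoplus_{\dim f=i}\k[\Z_2^n/G_f]$ of $\k[\Z_2^n]$-modules, and this absence of sign twists is exactly what lets the hypothesis on $\k$ be merely that $2$ be invertible.

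Next I would decompose this complex using the characters $\chi_S\colon\Z_2^n\to\{\pm1\}$, $\chi_S(g)=(-1)^{\sum_{i\in S}g_i}$, indexed by $S\subseteq[n]$. Because $2$, and hence $2^n=|\Z_2^n|$, is invertible in $\k$, the elements $e_S=2^{-n}\sum_{g\in\Z_2^n}\chi_S(g)\,g\in\k[\Z_2^n]$ are orthogonal idempotents with $\sum_S e_S=1$, so every $\k[\Z_2^n]$-module splits as $\bigoplus_S e_S(-)$. Applying $e_S$ degreewise to the equivariant cochain complex and using that the idempotents commute with the coboundary gives $H^i(X^\R;\k)=\bigoplus_{S\subseteq[n]}e_S H^i(X^\R;\k)$, reducing the computation to the individual summands $e_SH^i(X^\R;\k)$.

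The heart of the argument is to identify $e_SH^i(X^\R;\k)$ with $\tilde H^{i-1}(P_S;\k)$. By Frobenius reciprocity, $e_S\,\k[\Z_2^n/G_f]$ is $\k$ if $\chi_S$ restricts trivially to $G_f$ and is $0$ otherwise; and $\chi_S|_{G_f}$ is trivial precisely when $\chi_S(\lambda^\R(j))=1$ for every facet $F_j\supseteq f$, i.e. when no such $j$ lies in $[m]_S$, i.e. when $f\not\subseteq P_S$. Thus $e_SC^i(X^\R;\k)$ is free on the faces $f$ with $f\not\subseteq P_S$, which is exactly the relative cellular cochain group $C^i(P,P_S;\k)$; since the coboundary of $X^\R$ is $\pi$-compatible sheet-by-sheet, $e_S$ carries it to the relative coboundary, so $e_SH^i(X^\R;\k)\cong H^i(P,P_S;\k)$. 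Finally, $P$ is contractible, so the long exact sequence of the pair collapses to $H^i(P,P_S;\k)\cong\tilde H^{i-1}(P_S;\k)$. Summing over $S$ gives the first equality of the theorem, and the second follows from the homotopy equivalence $|K_S|\simeq P_S$ noted above.

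The step I expect to be the main obstacle is the precise identification in the previous paragraph: one must check that the face structure of $P$ genuinely lifts to a $\Z_2^n$-CW structure on $X^\R$ whose cochain modules are the untwisted permutation modules claimed (the orientation-triviality observation is the delicate point, and it is exactly where the theory departs from the $\Z_2$-coefficient picture of Davis--Januszkiewicz), and that the idempotent $e_S$ intertwines the coboundary of $X^\R$ with the relative coboundary of $(P,P_S)$ so that the isomorphism holds at the level of complexes, not merely of graded groups. Once this bookkeeping is in place, together with the combinatorial equivalence $f\subseteq P_S\iff\chi_S|_{G_f}\ne\mathbf 1$, the remaining homological algebra is routine, and the invertibility of $2$ enters only through the existence of the idempotents $e_S$.
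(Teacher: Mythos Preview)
The paper does not give its own proof of this theorem: it is stated as a quotation from \cite{ST2012} and \cite{CP2}, and the surrounding text merely explains its provenance and scope. So there is nothing in the paper to compare your argument against at the level of proof strategy.

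That said, your sketch is essentially the argument that appears in those cited references. The quotient description $X^\R=(\Z_2^n\times P)/\!\sim$, the lift of the face CW-structure of $P$ to a $\Z_2^n$-CW structure on $X^\R$ with permutation cochain modules $\bigoplus_f \k[\Z_2^n/G_f]$, the splitting via the character idempotents $e_S$ (available precisely because $2$ is a unit), and the identification $e_S C^\ast(X^\R;\k)\cong C^\ast(P,P_S;\k)$ followed by the long exact sequence of the pair---this is the standard route. Your combinatorial equivalence $\chi_S|_{G_f}\neq\mathbf 1 \iff f\subseteq P_S$ is correct once one notes that a face $f$ lies in $P_S=\bigcup_{j\in[m]_S}F_j$ iff it lies in a single $F_j$ with $j\in[m]_S$, which follows from the fact that the relative interior of $f$ meets a facet only if $f$ is contained in it.

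You have correctly identified the one genuinely delicate step: checking that, after applying $e_S$, the coboundary of the equivariant complex agrees on the nose with the relative cellular coboundary of $(P,P_S)$, not merely that the graded pieces match. Concretely, when a codimension-one face $f'$ of $f$ satisfies $f'\subseteq P_S$ but $f\not\subseteq P_S$, one must see that the contributions from the lifts of $f'$ over the various sheets cancel in the $\chi_S$-isotypic piece; this is where the orientation-triviality of the $G_{f'}$-action and the character calculation combine. In \cite{CP2} this is handled by an explicit chain-level computation; your outline points at exactly the right place but stops short of carrying it out. If you want a self-contained proof, that computation is what remains to be written.
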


Suciu and Trevisan in their unpublished paper \cite{ST2012} have established the formula for the rational Betti numbers of real toric manifolds.
Later, Choi and Park in \cite{CP2} have also derived a cohomology formula of real toric manifolds with the coefficient ring $G$, where $2$ is invertible in $G$. It should be noted that if the reduced cohomology of $P_S$ is $p$-torsion free for all $S \subseteq [n]$ and all odd primes $p$, then so is the cohomology of $X^\R$.
 This formula also determines a stable homotopy decomposition of a wider class of spaces called real toric spaces, see \cite{CKT}.

\subsection{Projective toric manifold associated to Weyl chambers}
As mentioned in Introduction, we mainly deal with the class of (real) toric manifolds associated to the decomposition given by Weyl chambers. Let $V$ be a finite dimensional real Euclidean space, $\Phi \subset V$ a root system, and $W$ its Weyl group. In $V$ we have the lattice $\Lambda = \{ v \in V \mid (v, \alpha) \in \Z \text{ for any $\alpha \in \Phi$} \}$ which defines an integral structure, where $(-,-)$ is the natural inner product. For each set $\Delta$ of simple roots in $\Phi$ we consider the cone $C_\Delta = \{ v \in V \mid (v, \alpha) >0 \text{ for any $\alpha \in \Delta$}\}$. These cones provide the rational polyhedral decomposition of $V$, i.e., the set of cones is a fan in $V$. Hence, it defines a projective toric variety, which is in fact smooth.

 From now on, let us consider the Weyl groups of regular types.
 Throughout this paper, for the Weyl group of type $A_n$, the corresponding toric variety, its fan, the underlying simplicial complex, the characteristic map, the corresponding real toric variety, and the $\Z_2$-characteristic map are denoted by $X_{A_n}$, $\Sigma_{A_n}$, $K_{A_n}$, $\lambda_{A_n}$, $X^\R_{A_n}$ and $\lambda^\R_{A_n}$, respectively. For the Weyl group of type $B_n$, the corresponding notions are similarly denoted by $X_{B_n}$, $\Sigma_{B_n}$, $K_{B_n}$, $\lambda_{B_n}$, $X^\R_{B_n}$ and $\lambda^\R_{B_n}$, respectively.

\subsection{Type $A_n$}

    In this subsection, we shall review a sketch of proof of Theorem~\ref{thm:A_n} and Corollary~\ref{cor:A_n}.
    The proof presented here is essentially the same with that by \cite{Suciu2012} or \cite{Choi-Park2015} for the special case when the corresponding graph is a complete graph.
    However, we enclose this subsection for the sake of self-contained readability. {Furthermore, it introduces a lemma needed to prove the main result.}

    Let $W$ be the Weyl group of type $A_n$.
    It is well-known that the vertices of $K_{A_n}$ can be identified by the nonempty proper subsets $I$ of $[n+1]$ and each $(\ell-1)$-dimensional simplex of $K_{A_n}$ is related to a nested $\ell$ nonempty proper subsets of $[n+1]$, that is,
    $\{I_{i_1}, \ldots, I_{i_\ell}\} \in K_{A_n}$ if and only if there is a permutation $\sigma$ on $[\ell]$ such that $I_{i_{\sigma(1)}} \subset \cdots \subset I_{i_{\sigma(\ell)}}$.
    In addition, the characteristic map $\lambda_{A_n}$ is
    $$
        \lambda_{A_n}(I) = \left\{
                             \begin{array}{ll}
                               \sum_{ k \in I } \varepsilon_{k}, & \hbox{if $\{n+1\} \not\in I$;} \\
                               \sum_{ k \in I \setminus \{n+1\} } \varepsilon_{k} - \varepsilon_{1}- \cdots - \varepsilon_{n}, & \hbox{if $\{n+1\} \in I$,}
                             \end{array}
                            \right.
    $$ where $\varepsilon_i$ is the $i$th standard vector of $\Z^n$.
    As consequence,
    $$
        \lambda^\R_{A_n}(I) = \left\{
                             \begin{array}{ll}
                               \sum_{ k \in I } \e_{k}, & \hbox{if $\{n+1\} \not\in I$;} \\
                               \sum_{ k \not\in I } \e_{k}, & \hbox{if $\{n+1\} \in I$,}
                             \end{array}
                              \right.
    $$ where $\e_i$ is the $i$th standard vector of $\Z_2^n$.

    From now on, let us compute the $\Q$-Betti number of $X^\R_{A_n}$.
    By Theorem~\ref{formula}, we have to consider $(K_{A_n})_S$ for all subsets $S \subset [n]$.
    Here are three important nontrivial steps.

    For an odd number $r$, define $K_{A_r}^{odd}$ as
    $$
        K_{A_r}^{odd} := (K_{A_r})_{[r]}.
    $$
    \begin{enumerate}
      \item $K_{A_r}^{odd}$ is homotopy equivalent to the wedge of spheres of dimension $\frac{r-1}{2}$.
      \item The reduced {Euler characteristic} $\tilde{\chi}(K_{A_r}^{odd})$ is $(-1)^{\frac{r-1}{2}} a_{r+1}$.
      \item For $S \subset [n]$ with $|S|=r$ or $|S|=r+1$ for some odd number $r$, $(K_{A_n})_S$ is homotopy equivalent to $K_{A_r}^{odd}$.
    \end{enumerate}

    By (1)--(3) together with Theorem~\ref{formula}, both Theorem~\ref{thm:A_n} and Corollary~\ref{cor:A_n} are immediately proved.

\section{Type $B_n$}

Let $\Phi$ be a root system of type $B_n$. It consists of $2n^2$ roots
    $$
        \pm \varepsilon_i ~(1\leq i\leq n)  \quad \text{ and }  \quad \pm\varepsilon_i \pm \varepsilon_j ~(1\leq i<j \leq n),
    $$ where $\varepsilon_i$ is the $i$th standard vector of $\R^n = V$.
    One can see that the lattice $\Lambda$ consists of all integral vectors in $\R^n$.
    We note that a line containing a ray of $\Sigma_{B_n}$ is the intersection of $n-1$ hyperplanes normal to $\Delta \setminus \{ \alpha \}$, where $\Delta$ is a set of simple roots of type $B_n$ and $\alpha \in \Delta$, and the direction of the ray is determined by $\alpha$. A set of simple roots of type $B_n$ forms
    $$
         \Delta = \{\mu_1 \varepsilon_{\sigma(1)} - \mu_2\varepsilon_{\sigma(2)}, \mu_2 \varepsilon_{\sigma(2)} - \mu_3\varepsilon_{\sigma(3)}, \ldots, \mu_{n-1} \varepsilon_{\sigma(n-1)}- \mu_n\varepsilon_{\sigma(n)}, \mu_n \varepsilon_{\sigma(n)} \},
    $$ where $\mu_j = \pm1$ and $\sigma \colon [n] \to [n]$ is a permutation.
    For $\alpha \in \Delta$, there exists a unique primitive integral vector $\beta = (b_1, \ldots, b_n)$ such that $(\beta, \alpha') = 0$ for all $\alpha' \in \Delta \setminus \{\alpha\}$ and $(\beta, \alpha)>0$. We note that each component $b_j$ of $\beta$ is either $\pm 1$ or $0$.
    Then, we label the ray of $\Sigma_{B_n}$ corresponding to $\alpha \in \Delta$ by the set $I = \{ jb_j \mid j=1, \ldots, n\} \subset [\pm n] =\{ \pm1, \pm2, \ldots, \pm n \}$.
    More precisely, by putting $x_i = \mu_i \varepsilon_{\sigma(i)} - \mu_{i+1}\varepsilon_{\sigma(i+1)}$ for $i=1, \ldots, n-1$ and $x_n = \mu_n \varepsilon_{\sigma(n)}$, if $\alpha = x_i$, then $\displaystyle \beta = \sum_{k=1}^{i} \mu_k \varepsilon_{\sigma(k)}$, and, hence, the corresponding label is $\{ \mu_1\sigma(1), \ldots, \mu_i \sigma(i) \}$.
    Therefore, the vertices of $K_{B_n}$ can be labelled by the nonempty subsets $I$ of $[\pm n]$ satisfying
\begin{equation} \tag{$\ast$}\label{eq:star-condition}
    \text{if $i \in I$, then $-i \not\in I$},
\end{equation}
    and the characteristic map $\lambda_{B_n}$ is
    $$
        \lambda_{B_n}(I) = \sum_{ k \in I\cap [n] } \varepsilon_{k} - \sum_{ k \in I\setminus[n] } \varepsilon_{-k}.
    $$
            As consequence,
    $$
        \lambda^\R_{B_n}(I) = \sum_{ k \in (I \cup -I) \cap [n] } \e_{k},
    $$ where $\e_i$ is the $i$th standard vector of $\Z_2^n$.

    Furthermore, one can see that each $n$-dimensional cone $C_\Delta$ in $\Sigma_{B_n}$ corresponds to $n$ subsets $I_1, \ldots, I_n$ satisfying \eqref{eq:star-condition} such that $I_1 \subsetneq \cdots \subsetneq I_n$ and vice versa. This implies that each $(\ell-1)$-dimensional simplex of $K_{B_n}$ is labelled by a nested $\ell$ subsets of $[\pm n]$ satisfying \eqref{eq:star-condition}, that is,
    $\{I_{i_1}, \ldots, I_{i_\ell}\} \in K_{B_n}$ if and only if there is a permutation $\sigma$ on $[\ell]$ such that $I_{i_{\sigma(1)}} \subset \cdots \subset I_{i_{\sigma(\ell)}}$.

\begin{example}\label{example:B_2}
    Let us consider $\Sigma_{B_2}$.
    The corresponding toric variety $X_{B_2}$ is $\C\P^2 \sharp 5\overline{\C\P}^2$, and the corresponding real toric variety $X^\R_{B_2}$ is the connected sum of six $\R\P^2$s.
    Let us compute the Betti number of $X^\R_{B_2}$ using Theorem~\ref{formula}.
    We express $\lambda^\R_{B_2}$ by a matrix and draw the geometric realization of $K_{B_2}$ as below, respectively:
\begin{center}
\setcounter{MaxMatrixCols}{20}
    \begin{tikzpicture}[scale=0.5]
        \draw (-10,0) node {$            \begin{pmatrix}
                1 & 2 & \overline{1} & \overline{2} & 12 & 1\overline{2} & \overline{1} 2 & \overline{1}\overline{2} \\ \hline
                1 & 0 & 1 & 0  & 1  & 1  & 1     & 1 \\
                0 & 1 & 0 & 1  & 1  & 1  & 1     & 1
            \end{pmatrix} $};
        \draw (2,0)--(1.5,1.5)--(0,2)--(-1.5,1.5)--(-2,0)--(-1.5,-1.5)--(0,-2)--(1.5,-1.5)--cycle;
        \draw (2,0) [right] node {$1$};
        \draw (0,2) [above] node {$2$};
        \draw (-2,0) [left] node {$\overline{1}$};
        \draw (0,-2) [below] node {$\overline{2}$};
        \draw (1.5,1.5) [above right] node {$12$};
        \draw (-1.5,1.5) [above left] node {$\overline{1}2$};
        \draw (-1.5,-1.5) [below left] node {$\overline{1}\overline{2}$};
        \draw (1.5,-1.5) [below right] node {$1\overline{2}$};
    \end{tikzpicture}
\end{center}
    where the numbers over the horizontal lines are indicators for vertices of $K_{B_2}$.
    Then, $(K_{B_2})_{\{1\}} \simeq S^1 \setminus \{ \{2\}, \{\overline{2}\}\}$ is homotopy equivalent to $S^0$, and similarly, we have
    $(K_{B_2})_{\{2\}} \simeq S^0$ and $(K_{B_2})_{\{1,2\}} \simeq \bigvee^3 S^0$.
    Therefore, the Betti number of $X^\R_{B_2}$ is
    $$ \beta^i(X^\R_{B_2};\Q) = \left\{
                                 \begin{array}{ll}
                                   1, & \hbox{$i=0$;} \\
                                   5, & \hbox{$i=1$;} \\
                                   0, & \hbox{otherwise.}
                                 \end{array}
                               \right.
    $$
\end{example}

    From now on, let us compute the $\Q$-Betti number of $X^\R_{B_n}$.
    By Theorem~\ref{formula}, we have to consider $(K_{B_n})_S$ for all subsets $S \subset [n]$.
    Given a subset $S \subset [n]$, then $(K_{B_n})_S$ is the restriction of $K_{B_n}$ by $\{ I \in V(K_{B_n}) \mid | S \cap I^\pm | \text{ is odd} \}$, where $V(K)$ is the vertex set of a simplicial complex $K$.

    Now let us consider the case where $S = [n]$. Define $K_{B_n}^{odd}$ as
    $$
        K_{B_n}^{odd} := (K_{B_n})_{[n]} = \{ \sigma \in K_{B_n} \mid \text{$\sigma$ consists of $I$ such that $| I |$ is odd} \}.
    $$

    We define the poset $S_{B_n}^{odd}$ whose vertices are the vertices of $K_{B_n}^{odd}$ and the partial order is given by inclusion, and define another poset $\tilde{S}_{B_n}^{odd} := S_{B_n}^{odd} \cup \{ \emptyset, [\pm n] \}$ with inclusion.
    Note that the order complex of $S_{B_n}^{odd}$ is $K_{B_n}^{odd}$, and hence, $\tilde{\chi}(K_{B_n}^{odd}) = \mu(\emptyset, [\pm n] )$ where $\mu$ is the M\"obius function of $\tilde{S}_{B_n}^{odd}$ (see Section~3 of \cite{Stanley1997} for details), that is,
  $$
    \mu (\rho,\tau) = \left\{
                  \begin{array}{ll}
                    1, & \hbox{if $ \rho=\tau$;} \\
                    - \sum_{\rho \leq \sigma < \tau} \mu (\rho,\sigma), & \hbox{if $\rho \subset \tau$ in $\tilde{S}_{B_n}^{odd}$.}
                  \end{array}
                \right.
  $$

  \begin{lemma} \label{lemma:comp_of_mobius}
    The absolute value of $\mu(\emptyset, [\pm n] )$ is $b_{ n}$.
    More precisely,
    \[
        \mu(\emptyset, [\pm n] ) = \left\{
                                     \begin{array}{ll}
                                       b_n, & \hbox{if $n \equiv 0,1 ~(\text{mod } 4)$;} \\
                                       -b_n, & \hbox{if $n \equiv 2,3 ~(\text{mod } 4)$.}
                                     \end{array}
                                   \right.
    \]
  \end{lemma}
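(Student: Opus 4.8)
The plan is to pin down $\mu(\emptyset,[\pm n])$ by a M\"obius recursion inside $\tilde S_{B_n}^{odd}$ and then recognize the outcome through exponential generating functions. First I would observe that the coordinate permutations together with the sign changes act transitively on the odd $(\ast)$-subsets of $[\pm n]$ of a fixed cardinality, so $\mu(\emptyset,I)$ depends only on $\ell:=|I|$; write $m_\ell$ for this common value. Moreover the interval $[\emptyset,I]$ in $\tilde S_{B_n}^{odd}$ is isomorphic to the poset obtained from the collection of all odd-cardinality subsets of an $\ell$-element set by adjoining a bottom element, so the defining recursion for $\mu$ becomes $\sum_{j\text{ odd},\,j\le\ell}\binom{\ell}{j}m_j=-1$ for every odd $\ell$. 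Setting $M(x):=\sum_{\ell\text{ odd}}m_\ell x^\ell/\ell!$, which is an odd function, and extracting the odd part of $e^xM(x)$, this family of identities is exactly $M(x)\cosh x=-\sinh x$, i.e. $M(x)=-\tanh x$. In particular $m_\ell=(-1)^{(\ell+1)/2}a_\ell$, where $a_\ell$ (for $\ell$ odd) is the tangent number appearing in the excerpt.

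Second, since there are exactly $\binom{n}{\ell}2^\ell$ odd $(\ast)$-subsets of $[\pm n]$ of size $\ell$, the recursion for $\mu(\emptyset,[\pm n])$ reads $-\mu(\emptyset,[\pm n])=1+\sum_{\ell\text{ odd},\,\ell\le n}\binom{n}{\ell}2^\ell m_\ell$ for $n\ge1$. This is a binomial convolution, so in exponential generating functions
\[
  \sum_{n\ge1}\mu(\emptyset,[\pm n])\frac{x^n}{n!}=-e^x\bigl(1+M(2x)\bigr)=e^x\bigl(\tanh 2x-1\bigr)=\frac{-e^{-x}}{\cosh 2x}.
\]

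Third, I would identify the right-hand side with a sign-twisted generating function of the Springer numbers. From $\sum_n b_n t^n/n!=(\cos t-\sin t)^{-1}$ one gets $E(\pm ix)=(\cosh x\mp i\sinh x)^{-1}$, and the real combination $\tfrac{-1-i}{2}E(ix)+\tfrac{-1+i}{2}E(-ix)$ collapses to exactly $-e^{-x}/\cosh 2x$. Reading off coefficients, this combination equals $\sum_n\varepsilon_n b_n x^n/n!$ with $\varepsilon_n=\operatorname{Re}\bigl((-1-i)i^n\bigr)\in\{-1,1\}$, a period-four function of $n$. Comparing with the previous display yields $\mu(\emptyset,[\pm n])=\varepsilon_n b_n$; in particular $\lvert\mu(\emptyset,[\pm n])\rvert=b_n$, and spelling out $\varepsilon_n$ according to the residue of $n$ modulo $4$ gives the refined statement.

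I expect the genuinely delicate point to be this last identification: getting the substitution $t\mapsto\pm ix$ and the resulting period-four sign exactly right, equivalently verifying the closed-form identity $e^x(\tanh 2x-1)=\tfrac{-1-i}{2}E(ix)+\tfrac{-1+i}{2}E(-ix)$ by clearing everything to exponentials, together with the minor bookkeeping that the convolution identity is only valid for $n\ge1$ (the case $n=0$, where $[\pm 0]=\emptyset$ and the poset is a single point, being handled trivially by itself). A generating-function-free alternative would be to match the recursion $-\mu(\emptyset,[\pm n])=1+\sum_\ell\binom{n}{\ell}2^\ell m_\ell$ against the Springer recursion $b_n=-\sum_{k<n}\binom{n}{k}c_{n-k}b_k$ arising from $(\cos t-\sin t)\sum_n b_n t^n/n!=1$ with $(c_j)_{j\ge0}=(1,-1,-1,1,1,-1,-1,1,\dots)$, but the generating-function route seems cleanest.
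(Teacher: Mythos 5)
Your argument is correct and follows essentially the same route as the paper: reduce to the fact that $\mu(\emptyset,I)$ depends only on $\ell=|I|$, sum over the $\binom{n}{\ell}2^\ell$ odd $(\ast)$-subsets of each size, and compare exponential generating functions with that of the Springer numbers. Two differences are worth recording. First, you derive the interval values $m_\ell=(-1)^{(\ell+1)/2}a_\ell$ from scratch, via the observation that $[\emptyset,I]$ is the odd-rank part of a Boolean lattice with a bottom adjoined, which packages into $M(x)\cosh x=-\sinh x$; the paper instead cites this value from \cite{Choi-Park2015}, so your version is self-contained. Second, your closing identity $e^x(\tanh 2x-1)=\tfrac{-1-i}{2}E(ix)+\tfrac{-1+i}{2}E(-ix)$ does check out (both sides equal $-e^{-x}/\cosh 2x$), and it organizes the final comparison more transparently than the paper's sentence about the odd and even degree terms of $M(ix)$ versus $-iM(ix)$.

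One point deserves emphasis: the sign you obtain, $\varepsilon_n=\mathrm{Re}\bigl((-1-i)i^n\bigr)$, equals $(-1,1,1,-1)$ for $n\equiv(0,1,2,3)\pmod 4$, which \emph{disagrees} with the signs displayed in the lemma at $n\equiv 0,2\pmod 4$. Here you are right and the displayed refinement is wrong: for $n=2$ the only elements of $\tilde{S}_{B_2}^{odd}$ strictly between $\emptyset$ and $[\pm 2]$ are the four singletons, so $\mu(\emptyset,[\pm 2])=-(1-4)=3=+b_2$ rather than $-b_2$; likewise $\mu(\emptyset,[\pm 4])=-(1-8+32\cdot 2)=-57=-b_4$. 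Since only the absolute value $|\mu(\emptyset,[\pm n])|=b_n$ is used afterwards, nothing downstream is affected, but you should state explicitly that what your computation yields is $\mu=b_n$ for $n\equiv 1,2$ and $\mu=-b_n$ for $n\equiv 0,3\pmod 4$ (with $n=0$ treated separately, where $\mu=1=b_0$), rather than asserting that it "gives the refined statement" as printed. The only other blemish is the one you flag yourself: the displayed generating-function identity is off by its constant term because the convolution identity fails at $n=0$; either restrict every series to $n\ge 1$ or correct by the constant $-1$ before matching coefficients.
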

  \begin{proof}

    In this proof, we use $i$ to denote the imaginary unit such that $i^2 = -1$.

    For a vertex $I$ in $\tilde{S}_{B_n}^{odd}$ such that $I$ is neither $\emptyset$ nor $[\pm n]$, put $|I| = 2k+1$.
    Note that the M\"obius  function $\mu(\emptyset, I)$ depends only on $|I|$ and $\mu(\emptyset, I) = (-1)^{k+1} a_{2k+1}$ (see the proof of Theorem~2.9 of \cite{Choi-Park2015}).
    Hence, we have
    \begin{align*}
          -\mu(\emptyset, [\pm n]) &= 1+ \sum_{k=0}^{\lfloor \frac{n-1}{2} \rfloor}  (-1)^{k+1} a_{2k+1} 2^{2k+1} \binom{n}{2k+1} \\
           &= 1 + i\sum_{k=0}^\infty a_{2k+1} (2i)^{2k+1}\binom{n}{2k+1}.
    \end{align*}

    Recall that the exponential generating functions of $a_n$ and $b_n$ are
    \[
        \sum_{n=0}^\infty a_n \frac{x^n}{n!} = \sec x + \tan x
    \]
    and
    \[
        B(x):=\sum_{n=0}^\infty b_n \frac{x^n}{n!} = \frac1{\cos x - \sin x} = (\cos x + \sin x)\sec 2x
    \]
    respectively. Since $e^x = \cos (-ix) + i \sin (-ix)$, we have
    \begin{align*}
        M(x) := -\sum_{n=0}^\infty \mu(\emptyset, [\pm n]) \frac{x^n}{n!} &= e^x (1 + i\tan(2ix))
        \\& = e^x \frac{ \cos (2ix) + i \sin (2ix)}{\cos(2ix)}
        \\& =(\cos(ix) + i \sin (ix)) \sec (2ix).
    \end{align*}
    Therefore, $M(ix) = (\cos x - i \sin x)\sec 2x$. Since the exponential generating function of $\sec x$ has only even degree terms, $\cos x \sec 2x$ contributes the even degree terms of $M(ix)$ and $\sin x \sec 2x$ contributes the odd degree terms of $-iM(ix)$. Therefore, the lemma immediately follows from that the odd degree term of $B(x)$ is equal to that of $M(ix)$ and the even degree term of $B(x)$ is equal to that of $-iM(ix)$.
  \end{proof}

We will use the following well-known lemma in \cite{Bjorner-Waches1996}. {This can be regarded as an alternative definition of shellability. Recall that a simplicial complex is called shellable if it admits a shelling.}
\begin{lemma}\cite[Lemma 2.3]{Bjorner-Waches1996}\label{lem:shellable}
An order $\mathcal{F}_1$, $\mathcal{F}_2$, $\ldots$, $\mathcal{F}_t$ of the facets of a simplicial complex is a shelling if and only if for every $i$ and $k$ with $1\le i<k\le t$ there is a $j$ with $1\le j<k$ such that $\mathcal{F}_i\cap \mathcal{F}_k \subseteq \mathcal{F}_j \cap \mathcal{F}_k$ and $|\mathcal{F}_j\cap \mathcal{F}_k|= |\mathcal{F}_k|-1$.
\end{lemma}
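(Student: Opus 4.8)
The plan is to derive this purely from the definition of a shelling. Recall that, following \cite{Bjorner-Waches1996}, an ordering $\mathcal{F}_1,\dots,\mathcal{F}_t$ of the facets of a simplicial complex $\Gamma$ is a shelling when, for every $k$ with $2\le k\le t$, the subcomplex $\Gamma_k:=\overline{\mathcal{F}_k}\cap\bigl(\bigcup_{i=1}^{k-1}\overline{\mathcal{F}_i}\bigr)$ is pure of dimension $\dim\mathcal{F}_k-1$, where $\overline{\mathcal{F}}$ denotes the simplex on the vertex set $\mathcal{F}$ together with all of its faces. First I would record the identity $\Gamma_k=\bigcup_{i<k}\overline{\mathcal{F}_i\cap\mathcal{F}_k}$, which is immediate since a subset $G\subseteq\mathcal{F}_k$ lies in some $\overline{\mathcal{F}_i}$ with $i<k$ exactly when $G\subseteq\mathcal{F}_i$, that is, $G\subseteq\mathcal{F}_i\cap\mathcal{F}_k$. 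I would also note that distinct facets are incomparable, so $\mathcal{F}_i\cap\mathcal{F}_k\subsetneq\mathcal{F}_k$ whenever $i\ne k$; in particular every face of $\Gamma_k$ has at most $|\mathcal{F}_k|-1$ vertices.

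For the implication ($\Leftarrow$), assume the stated condition and fix $k$. Any face $G$ of $\Gamma_k$ satisfies $G\subseteq\mathcal{F}_i\cap\mathcal{F}_k$ for some $i<k$, and by hypothesis there is $j<k$ with $\mathcal{F}_i\cap\mathcal{F}_k\subseteq\mathcal{F}_j\cap\mathcal{F}_k$ and $|\mathcal{F}_j\cap\mathcal{F}_k|=|\mathcal{F}_k|-1$. Since $\mathcal{F}_j\cap\mathcal{F}_k\in\Gamma_k$ has exactly $|\mathcal{F}_k|-1$ vertices and no face of $\Gamma_k$ has more, $\mathcal{F}_j\cap\mathcal{F}_k$ is a facet of $\Gamma_k$, of dimension $\dim\mathcal{F}_k-1$, and it contains $G$. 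Thus every facet of $\Gamma_k$ has dimension $\dim\mathcal{F}_k-1$, so $\Gamma_k$ is pure of that dimension and the given ordering is a shelling.

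For the implication ($\Rightarrow$), assume $\mathcal{F}_1,\dots,\mathcal{F}_t$ is a shelling and fix $i<k$. Then $\mathcal{F}_i\cap\mathcal{F}_k$ is a face of $\Gamma_k$, which is pure of dimension $\dim\mathcal{F}_k-1$, so it is contained in a facet $G$ of $\Gamma_k$ with $|G|=|\mathcal{F}_k|-1$. Choosing $j<k$ with $G\subseteq\mathcal{F}_j$, we get $G\subseteq\mathcal{F}_j\cap\mathcal{F}_k\subsetneq\mathcal{F}_k$, whence $|\mathcal{F}_j\cap\mathcal{F}_k|\le|\mathcal{F}_k|-1=|G|$ and therefore $G=\mathcal{F}_j\cap\mathcal{F}_k$. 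This $j$ is the index required by the condition, as $\mathcal{F}_i\cap\mathcal{F}_k\subseteq\mathcal{F}_j\cap\mathcal{F}_k$ and $|\mathcal{F}_j\cap\mathcal{F}_k|=|\mathcal{F}_k|-1$.

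I do not expect a genuine obstacle here: once the definition is unwound the argument is elementary set counting. The only points that need care are that the purity condition defining a shelling is imposed on each $\Gamma_k$ relative to $\dim\mathcal{F}_k$---so the proof goes through verbatim whether or not $\Gamma$ itself is pure---and the repeated use of the elementary fact that two distinct facets can never contain one another. I would present the two directions precisely as sketched above.
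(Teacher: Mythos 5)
Your proof is correct and is essentially the standard argument for \cite[Lemma 2.3]{Bjorner-Waches1996}: the paper itself only cites this lemma without proof, and your direct verification from the definition (via the identity $\Gamma_k=\bigcup_{i<k}\overline{\mathcal{F}_i\cap\mathcal{F}_k}$ and the incomparability of distinct facets) is exactly how it is proved in the reference. Both directions are handled correctly, including the observation that the condition forces $\Gamma_k$ to contain a face of cardinality $|\mathcal{F}_k|-1$, so purity in the required dimension is genuinely established.
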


\begin{lemma}\label{lem:B:shellable}
For any integer $n$, $K_{B_n}^{odd}$ is shellable.
\end{lemma}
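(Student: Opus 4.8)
The plan is to exhibit an explicit linear order on the facets of $K_{B_n}^{odd}$ and verify the combinatorial criterion of Lemma~\ref{lem:shellable}. First I recall the combinatorial model: a facet of $K_{B_n}^{odd}$ is a maximal chain $I_1 \subsetneq I_2 \subsetneq \cdots \subsetneq I_r$ of subsets of $[\pm n]$, each satisfying the star condition \eqref{eq:star-condition} and each of odd cardinality, where the chain starts at a singleton $|I_1|=1$ and the cardinalities increase by exactly $2$ at each step, ending at a set $I_r$ with $|I_r|=n$ if $n$ is odd, or $|I_r| = n-1$ if $n$ is even (this is forced by the odd-cardinality requirement together with maximality). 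Such a chain is equivalent to an ordered sequence of ``blocks'': a sign $\mu_1 \in \{\pm 1\}$ and an index $\sigma(1)$ giving $I_1 = \{\mu_1\sigma(1)\}$, and then for each $j\ge 2$ an \emph{unordered} pair $\{\mu_{2j-2}\sigma(2j-2), \mu_{2j-1}\sigma(2j-1)\}$ of new signed letters added to form $I_j$ from $I_{j-1}$. Thus a facet is encoded by a signed permutation word of $\lfloor n/2\rfloor\cdot 2 + 1$ or $\lfloor n/2\rfloor \cdot 2$ letters, read off in the order the letters are introduced, modulo swapping the two letters within each size-$2$ block.

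Next I would choose a total order on these encodings — the natural candidate is to fix a linear order on the signed letters of $[\pm n]$ (say $\overline{1} < 1 < \overline{2} < 2 < \cdots$, or simply $1 < 2 < \cdots < n < \overline{n} < \cdots < \overline{1}$, whichever makes the bookkeeping cleanest), represent each facet by the lexicographically smallest of the two words obtained from the within-block swaps, and then order facets lexicographically by these representative words. With this order in hand, the verification of Lemma~\ref{lem:shellable} is the routine ``swap the first place of disagreement'' argument: given facets $\cF_i$ (earlier) and $\cF_k$ (later), locate the first block position at which their encodings differ; this gives a facet $\cF_j$ obtained from $\cF_k$ by replacing, at that position, the relevant signed letter(s) of $\cF_k$ by those of $\cF_i$ (and leaving the tail unchanged), which one checks is again a valid chain of the required form, precedes $\cF_k$ in the order, satisfies $\cF_i\cap\cF_k \subseteq \cF_j\cap\cF_k$, and differs from $\cF_k$ in a single vertex so that $|\cF_j\cap\cF_k| = |\cF_k| - 1$.

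The main obstacle — and the only place where type $B_n$ genuinely differs from the type $A_n$ argument — is the interaction between the star condition \eqref{eq:star-condition} and the block structure: when I modify $\cF_k$ at the first disagreement to produce $\cF_j$, I must ensure that the new set still has no pair $\{t,-t\}$, and that the ``first disagreement'' is taken at the level of blocks (size-$2$ steps) rather than individual letters, since within a block the two letters are interchangeable. I expect to handle this by arguing that if $\cF_i$ and $\cF_k$ first disagree in the set at the $j$th step, then either the two new letters added by $\cF_k$ at step $j$ are disjoint (as unsigned letters) from everything available to be inserted from $\cF_i$, in which case the swap is unobstructed, or they share an unsigned letter, in which case a short case analysis on signs — using that the star condition was already satisfied by both chains — shows the replacement is still star-admissible. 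Once this point is settled the rest is bookkeeping, and I would phrase the whole verification in terms of the signed-permutation encoding to keep the sign case analysis contained. An alternative, which I would mention as a remark, is to instead produce a recursive shelling: show $K_{B_n}^{odd}$ is obtained from copies of $K_{B_{n-2}}^{odd}$ (indexed by the choice of the pair added last, or first) glued along subcomplexes, and invoke a standard lemma that an appropriately ordered union of shellable complexes meeting in shellable subcomplexes is shellable; but the explicit lexicographic shelling is cleaner to state and is what the subsequent homology computation will want.
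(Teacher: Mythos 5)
Your combinatorial model of the facets is correct (the maximal faces of $K_{B_n}^{odd}$ are exactly the chains $I_1\subsetneq\cdots\subsetneq I_r$ with $|I_j|=2j-1$ and $r=\lceil n/2\rceil$), and a lexicographic shelling of this kind does exist, but the verification you sketch breaks down at its central step. Write $\cF_k$ as $J_1\subsetneq\cdots\subsetneq J_r$ and let $q$ be the first block position where $\cF_i$ and $\cF_k$ disagree. You cannot ``replace the block at position $q$ and leave the tail unchanged'': if you keep the later \emph{blocks} (increments), then every set $J_q,J_{q+1},\ldots,J_r$ changes, so in general the new facet shares only the first $q-1$ vertices with $\cF_k$ rather than $|\cF_k|-1$ of them; if instead you keep the later \emph{sets}, the replacement block must be one of the $\binom{4}{2}$ two-element subsets of $J_{q+1}\setminus J_{q-1}$, whereas the block that $\cF_i$ introduces at position $q$ need not be contained in $J_{q+1}$ at all (it may use signed letters that $\cF_k$ only introduces much later, or never). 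So the facet $\cF_j$ required by Lemma~\ref{lem:shellable} is \emph{not} obtained by splicing $\cF_i$'s letters into $\cF_k$; one must instead find an index $s$ with $J_s\notin\cF_i$ at which $J_s$ can be replaced by a lexicographically smaller $J_s'$ with $J_{s-1}\subsetneq J_s'\subsetneq J_{s+1}$, and prove that such an $s$ always exists whenever $\cF_i\prec\cF_k$ --- this is precisely the content of an EL/CL-shellability argument, and it is the part that is missing. By contrast, the issue you single out as the main obstacle, the interaction with the star condition \eqref{eq:star-condition}, is harmless: all elements of $J_{s+1}$ have distinct absolute values, so every set between $J_{s-1}$ and $J_{s+1}$ is automatically star-admissible.

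For comparison, the paper's proof avoids this entirely: $K_{B_n}$ bounds a convex polytope and is therefore shellable; deleting the even-cardinality vertices from each facet of $K_{B_n}$ maps its facets onto the facets of $K_{B_n}^{odd}$, and ordering these images by first appearance in a shelling of $K_{B_n}$ is verified, via Lemma~\ref{lem:shellable}, to be a shelling of $K_{B_n}^{odd}$. The explicit lexicographic order you propose is essentially the one the paper states, without proof, only as a remark after the lemma. If you wish to keep your route, you should either carry out the interval analysis above directly, or exhibit a recursive atom ordering (equivalently a CL-labeling) of $\tilde{S}_{B_n}^{odd}$ and invoke the Bj\"orner--Wachs theorem that the induced lexicographic order on maximal chains is then a shelling; your alternative suggestion of a recursive decomposition into copies of $K_{B_{n-2}}^{odd}$ would face the same gluing verification in a different guise.
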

\begin{proof}
Note that since $K_{B_n}$ bounds for a convex polytope, it is shellable.
Choose a shelling $\sigma \colon  F_1,\ldots,F_t$ of $K_{B_n}$.
For each $m\in [t]$, let $F'_m$ be the face obtained from $F_m$ by deleting all vertices of $F_m$ corresponding to even subsets of $[\pm n]$.
Note that for any $m\in[t]$, $F'_m$ is a facet of $K_{B_n}^{odd}$.
Then consider an ordering $\sigma': F'_1,\ldots,F'_{t}$ of the facets of $K_{B_n}^{odd}$, and then we delete $F'_m$  whenever $F'_m=F'_\ell$ for some $\ell$ such that $\ell<m$. Let $\sigma^* \colon F^*_{1}, F^*_{2}, \ldots, F^*_{s}$ be the resulting ordering, that is, the ordering obtained from $\sigma'$ by dropping all facets of $K_{B_n}^{odd}$  not firstly appeared in $\sigma'$.
Clearly, $\sigma^*$ is an ordering of the facets of $K_{B_n}^{odd}$.
We will show that $\sigma^*$ is a shelling of $K_{B_n}^{odd}$.
By Lemma~\ref{lem:shellable}, it is enough to show that, for every $i$ and $k$ with $1 \leq i < k \leq s$, there is $j$ with $1 \leq j < k$ such that
\begin{enumerate}
  \item \label{eq:cond_1} $F_i^* \cap F_k^* \subseteq F_j^* \cap F_k^*$, and
  \item \label{eq:cond_2} $|F_j^* \cap F_k^*| = |F_k^*|-1$.
\end{enumerate}
For each $m\in [s]$, let $d_m\in [t]$ be the smallest integer such that $F^*_m \subset F_{d_m}$, i.e., $F_{d_m}$ is the first facet in $\sigma$ containing $F^*_{m}$.
Note that for all $\ell, m \in [s]$,
\begin{eqnarray}\label{ij}
d_\ell< d_m \text{ if and only if } \ell<m.
\end{eqnarray}
Take $i$ and $k$ with $1\le i<k\le s$.
Then $F^*_i \subset F_{d_i}$ and $F^*_{k}\subset F_{d_k}$.
Since $d_i <d_k$ by \eqref{ij}, by considering two facets $F_{d_i}$ and $F_{d_k}$ of $K_{B_n}$ together with Lemma~\ref{lem:shellable}, there is  $J$ with $1\le J <d_k$ such that
$F_{d_i}\cap F_{d_k} \subseteq F_{J} \cap F_{d_k}$ and $|F_{J}\cap F_{d_k}|= |F_{d_k}|-1$.
Then we consider a facet $F'_{J}$ of $K_{B_n}^{odd}$.
Let $j$ be the smallest integer such that $F^*_{j}=F'_{J}$.
Then $d_{j}\le J$ by definition, and so we have $d_{j}\le J < d_k$.
Thus $j<k$ by \eqref{ij}, and it indeed satisfies the conditions \eqref{eq:cond_1} and \eqref{eq:cond_2} as follows.

Let $V$ be the set of vertices of $K_{B_n}^{odd}$.
Note that $F^*_{i}\cap F^*_{k}=F_{d_i}\cap F_{d_k}\cap V$ and  $F^*_{j} \cap F^*_{k}=F_{J}\cap F_{d_k}\cap V$.
Therefore \eqref{eq:cond_1} follows from the fact that $F_{d_i}\cap F_{d_k} \subseteq F_{J} \cap F_{d_k}$.
Moreover, since $F^*_{j}\cap F^*_{k}=F_{J}\cap F_{d_k}\cap V$  and $|F_{J}\cap F_{d_k}|= |F_{d_k}|-1$, we have
$|F^*_{j}\cap F^*_{k}|\ge |F^*_{k}|-1$.
Since $j \neq k$ implies that $F^*_{j}\neq F^*_{k}$, \eqref{eq:cond_2} is proved.
\end{proof}

Note that $K_{B_n}^{odd}$ is homotopy equivalent to a wedge of uniform spheres $S^d$ as it is shellable.
One can easily see that the dimension of the sphere is $d = \lfloor \frac{n-1}{2} \rfloor$ by observing the dimension of the facets.
Since the absolute value of the reduced Euler characteristic of $K_{B_n}^{odd}$ is $b_n$ by Lemma~\ref{lemma:comp_of_mobius}, we conclude that $K_{B_n}^{odd} \simeq \bigvee^{b_n} S^{\lfloor \frac{n-1}{2} \rfloor}$.

\begin{remark}
Here we give an explicit shelling of $K_{B_n}^{odd}$.
We define an ordering $\prec$ on $[\pm n]$,
\[1\prec 2\prec \cdots \prec n \prec -1\prec \cdots \prec -n,\]
(just fix an ordering so that the positive integers proceed to the negative integers)
and we define an order lexicographically induced by $\prec$ on the set of all maximal chains of $\tilde{S}_{B_n}^{odd}$ (comparing the smaller element).
We also denote by the same symbol $\prec$ the order on the set of all maximal chains.
More precisely,
for  two maximal chains $\sigma$ and $\sigma'$ such that
\begin{eqnarray*}
&&\sigma: \emptyset=I_0\subsetneq I_1\subsetneq I_2\subsetneq \cdots\subsetneq I_r\subsetneq I_{r+1}=[\pm n]\\
&&\sigma': \emptyset=I'_0\subsetneq I'_1\subsetneq I'_2\subsetneq \cdots\subsetneq I'_r\subsetneq I'_{r+1}=[\pm n],
\end{eqnarray*}
we say $\sigma' \prec \sigma$ if there exists $1 \le i\le r$ such that ${I'_{i}} <_{lexi} {{I}_i}$ (comparing lexicographically under the ordering $\prec$ on $[\pm n]$)
and  ${{I'}_{j}}= {{I}_j} $  for any $j<i$.
Then it can be shown that this ordering on maximal chains gives a shelling of $\tilde{S}_{B_n}^{odd}$.
\end{remark}

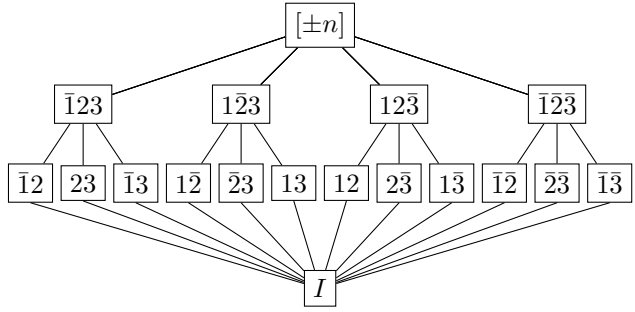
\begin{figure}
\centering
\begin{tikzpicture}[scale=.7]
	\node [draw] (-12) at (-6,0) {$\bar{1}2$};
	\node [draw] (23) at (-5,0) {$23$};
	\node [draw] (-13) at (-4,0) {$\bar{1}3$};
	\node [draw] (-123) at (-5,1.5) {$\bar{1}23$};
	\node [draw] (1-2) at (-3,0) {$1\bar{2}$};
	\node [draw] (-23) at (-2,0) {$\bar{2}3$};
	\node [draw] (13) at (-1,0) {$13$};
	\node [draw] (1-23) at (-2,1.5) {$1\bar{2}3$};
	\node [draw] (12) at (0,0) {$12$};
	\node [draw] (2-3) at (1,0) {$2\bar{3}$};
	\node [draw] (1-3) at (2,0) {$1\bar{3}$};
	\node [draw] (12-3) at (1,1.5) {$12\bar{3}$};
	\node [draw] (-1-2) at (3,0) {$\bar{1}\bar{2}$};
	\node [draw] (-2-3) at (4,0) {$\bar{2}\bar{3}$};
	\node [draw] (-1-3) at (5,0) {$\bar{1}\bar{3}$};
	\node [draw] (-1-2-3) at (4,1.5) {$\bar{1}\bar{2}\bar{3}$};
	\node [draw] (I) at (-0.5,-2) {$I$};
	\node [draw] (n) at (-0.5,3) {$[\pm n]$};
	\draw (I) -- (12.south);
\draw (12)-- (12-3)--(n);
	\draw (I) --(-12.south);
\draw  (-12)-- (-123)--(n);
	\draw (I) -- (1-2.south);
\draw (1-2) -- (1-23)--(n);
	\draw (I) --(-1-2.south);
\draw (-1-2) -- (-1-2-3)--(n);
	\draw (I) -- (13.south);
\draw (13) -- (1-23)--(n);
	\draw (I) --(-13.south);
\draw (-13) -- (-123)--(n);
	\draw (I) -- (1-3.south);
\draw (1-3) -- (12-3)--(n);
	\draw (I) --(-1-3.south);
\draw (-1-3) -- (-1-2-3)--(n);
	\draw (I) -- (23.south);
\draw (23) -- (-123)--(n);
	\draw (I) --(-23.south);
\draw (-23) -- (1-23)--(n);
	\draw (I) -- (2-3.south);
\draw (2-3) -- (12-3)--(n);
	\draw (I) --(-2-3.south);
\draw (-2-3) -- (-1-2-3)--(n);
\end{tikzpicture}
\caption{The interval $[I, [\pm n]]$ of the poset $\tilde{S}_{C_n}^{odd}$.
Here, $\bar{i}$ means $-i$ and the label in each box means a vertex in $K_{C_n}^{odd}$ obtained by the union of $I$ and the elements in the label.
For example, $\bar{1}2$ and $\bar{2}{3}$ mean $I\cup\{-1,2\}$ and $I\cup\{-2,3\}$, respectively.}  \label{Fig:C}
\end{figure}

\begin{remark}
The proof of Lemma~\ref{lem:B:shellable} is not extended naturally to the case for Weyl chambers of types $C_n$ and $D_n$. One can check that
$K_{C_n}^{odd}$ is a set of faces $\sigma$ in $K_{C_n}$ such that $\sigma$ consists of $I$ satisfying one of (1)$\sim$(4):
\begin{itemize}
\item[(1)] $n\not\in I^{\pm}$, and $|I|$ is odd;
\item[(2)] $n\in I^{\pm}$, $|I|\neq n$, $n-|I|$ is odd;
\item[(3)]$n\in I$, $|I|=n$,  $|I\setminus[n]|$ is even;
\item[(4)]$-n\in I$, $|I|=n$,  $|I\cap[n]|$ is even,
\end{itemize}
where $I^{\pm}$ denotes $(I \cup -I) \cap [n]$.
In general, $K_{C_n}^{odd}$ is not shellable.
For an illustration, consider $\tilde{S}_{C_n}^{odd}$ when $n$ is an even integer such that $n\ge 4$.
Let $I=\{4,5,\ldots,n\}$ and consider the interval $[I, [\pm n]]$ of the poset $\tilde{S}_{C_n}^{odd}$, see Figure~\ref{Fig:C}.
It is easy to see that the interval $[I, [\pm n]]$ is not shellable.
Since $\tilde{S}_{C_n}^{odd}$ has an non-shellable interval, it is not shellable.
It can be similarly shown that $K_{D_n}^{odd}$ is not shellable.
\end{remark}

Now, let us return to the case where $S \neq [n]$. If $S$ is an empty set, so is $(K_{B_n})_S$.

\begin{lemma}[Lemma~5.2 of \cite{Choi-Park2015}] \label{lem:st}
        Let $I$ be a vertex of a simplicial complex  {$K$} and suppose that the link of $I$, $\Lk I$, is contractible.
        Then {$K$} is homotopy equivalent to the complex ${K\setminus\St I}$, where $\St I$ is the star of $I$.
\end{lemma}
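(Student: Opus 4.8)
The plan is to realize $K$ as a union of two subcomplexes, one of which is contractible and whose overlap is contractible, and then to produce the homotopy equivalence by collapsing those contractible pieces. Write $K \setminus \St I$ for the subcomplex of $K$ consisting of all faces that do not contain $I$ (the deletion of $I$), and write $\overline{\St I} = \{\sigma \in K : \sigma \cup \{I\} \in K\}$ for the closed star, which is the simplicial join $\{I\} * \Lk I$, i.e.\ the cone with apex $I$ over $\Lk I$, and is therefore contractible. Every face of $K$ either contains $I$ or does not, so $K = (K \setminus \St I) \cup \overline{\St I}$; and a face lies in both subcomplexes exactly when it does not contain $I$ yet forms a face together with $I$, i.e.\ exactly when it lies in $\Lk I$. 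Thus $K$ is obtained by gluing $K \setminus \St I$ and $\overline{\St I}$ along $\Lk I$.

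From here I would argue in three short steps, each invoking the standard fact that if $(X,A)$ is a CW pair with $A$ contractible then the quotient map $X \to X/A$ is a homotopy equivalence. First, since $\overline{\St I}$ is a contractible subcomplex of $K$, the collapse $K \to K/\overline{\St I}$ is a homotopy equivalence. Second, because $\overline{\St I}$ meets the rest of $K$ precisely along $\Lk I$ (equivalently, $K$ is the pushout of $K \setminus \St I \hookleftarrow \Lk I \hookrightarrow \overline{\St I}$), collapsing the subcomplex $\overline{\St I}$ inside $K$ has the same effect as collapsing the subcomplex $\Lk I$ inside $K \setminus \St I$; this yields a homeomorphism $K/\overline{\St I} \cong (K \setminus \St I)/\Lk I$. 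Third, since $\Lk I$ is a contractible subcomplex of $K \setminus \St I$ by hypothesis, the collapse $K \setminus \St I \to (K \setminus \St I)/\Lk I$ is again a homotopy equivalence. Composing the three gives $K \simeq K \setminus \St I$, and chasing through the maps shows the resulting equivalence can be taken to be (homotopic to) the inclusion $K \setminus \St I \hookrightarrow K$.

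The only step that is not purely formal is the middle identification $K/\overline{\St I} \cong (K \setminus \St I)/\Lk I$, so I expect that to be the main (minor) obstacle: it is the assertion that collapsing one of two subcomplexes of a CW complex is the same as collapsing their intersection inside the other, which for CW complexes is immediate from comparing quotient topologies, and which can equivalently be packaged as an instance of the gluing lemma for homotopy pushouts (the inclusion $\Lk I \hookrightarrow \overline{\St I}$ is a cofibration, so the square presenting $K$ is a homotopy pushout, and replacing its contractible corner $\overline{\St I}$ by a point produces the homotopy-equivalent space $(K \setminus \St I)/\Lk I$). One could instead try to write down an explicit strong deformation retraction of $K$ onto $K \setminus \St I$ by extending a contraction of $\Lk I$ to a sweep of the cone $\overline{\St I} = \{I\} * \Lk I$ that pushes the open star onto $\Lk I$, but keeping such a map continuous near the cone point is fiddly, so the collapsing argument above is the route I would take.
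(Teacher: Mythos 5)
Your argument is correct: the decomposition $K = (K\setminus\St I)\cup\overline{\St I}$ with intersection $\Lk I$, the contractibility of the closed star as a cone, and the three collapsing steps (each justified by the homotopy extension property of CW pairs) give a complete and standard proof, and your two-out-of-three remark even upgrades the conclusion to the inclusion $K\setminus\St I\hookrightarrow K$ being a homotopy equivalence. The paper itself does not prove this lemma but cites it from \cite{Choi-Park2015}, where essentially this same gluing/collapsing argument is used, so your proposal matches the intended proof.
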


\begin{lemma}\label{lem:deletion_B}
For a positive integer $n\ge 3$,
for $S\subset [n]$, $(K_{B_n})_S$ is homotopy equivalent to $(K_{B_n})'_{S}$,
where $(K_{B_n})'_{S}$ is obtained  from $(K_{B_n})_S$ by deleting vertices $I$ in $(K_{B_n})_S$ such that  $I^{\pm}\not\subset S$.
\end{lemma}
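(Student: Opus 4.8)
\emph{Proof strategy.} I would realize $(K_{B_n})_S$ as the order complex of the poset $P$ consisting of all subsets $I\subseteq[\pm n]$ that satisfy \eqref{eq:star-condition} and have $|S\cap I^{\pm}|$ odd, ordered by inclusion. Call such a vertex $I$ \emph{good} if $I^{\pm}\subseteq S$ and \emph{bad} otherwise; then $(K_{B_n})'_{S}$ is precisely the full subcomplex of $(K_{B_n})_S$ spanned by the good vertices, i.e. the induced subcomplex obtained by deleting every bad vertex. The plan is to delete the bad vertices one at a time, in an order of \emph{nondecreasing cardinality}, checking at each stage that the vertex being removed has contractible link, so that Lemma~\ref{lem:st} applies and the homotopy type is preserved.

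Concretely, I would fix an enumeration $I_1,\dots,I_N$ of the bad vertices with $|I_1|\le\cdots\le|I_N|$, set $L_0=(K_{B_n})_S$, and put $L_k=L_{k-1}\setminus\St I_k$ for $k=1,\dots,N$. Each $L_k$ is again the order complex of the induced subposet $P_k=P\setminus\{I_1,\dots,I_k\}$, so $L_N=(K_{B_n})'_S$, and it suffices to check that $\Lk_{L_{k-1}} I_k$ is contractible for every $k$. For an order complex the link of a vertex splits as a join, namely $\Lk_{L_{k-1}} I_k=\Delta\big((P_{k-1})_{<I_k}\big)\ast\Delta\big((P_{k-1})_{>I_k}\big)$, so it is enough to prove that the lower factor $\Delta\big((P_{k-1})_{<I_k}\big)$ is contractible.

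The heart of the argument is the following identification of $(P_{k-1})_{<I_k}$. Since the deletions are performed by nondecreasing cardinality, every $J\in P_{k-1}$ with $J\subsetneq I_k$ satisfies $|J|<|I_k|$, hence was not among $I_1,\dots,I_{k-1}$ and is therefore good; conversely every good vertex $J\subsetneq I_k$ survives in $L_{k-1}$. Thus $(P_{k-1})_{<I_k}$ is exactly the set of good vertices properly contained in $I_k$. Now set $J_k:=I_k\cap(S\cup -S)$. Because $I_k$ is bad there is $j\in I_k^{\pm}\setminus S$ with $\pm j\in I_k$ but $\pm j\notin S\cup -S$, so $J_k\subsetneq I_k$; moreover $|J_k|=|S\cap I_k^{\pm}|$ is odd, so $J_k$ is a nonempty vertex of $(K_{B_n})_S$ and it is good. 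Finally, every good vertex $J\subsetneq I_k$ has all its elements of absolute value in $S$ and is contained in $I_k$, hence $J\subseteq I_k\cap(S\cup -S)=J_k$. Therefore $J_k$ is the maximum of $(P_{k-1})_{<I_k}$, its order complex is a cone with apex $J_k$ and hence contractible, and since the join of a contractible complex with any complex is contractible, $\Lk_{L_{k-1}} I_k$ is contractible. Lemma~\ref{lem:st} then gives $L_{k-1}\simeq L_k$, and chaining these equivalences yields $(K_{B_n})_S=L_0\simeq L_N=(K_{B_n})'_S$.

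The only delicate point is the bookkeeping: one must ensure that by the time $I_k$ is processed, its down-set inside the \emph{current} complex $L_{k-1}$ has already been pruned to good vertices, which is exactly what deleting in nondecreasing cardinality order guarantees. Once this is arranged, the choice of cone point $J_k=I_k\cap(S\cup -S)$ and the check that it dominates the lower interval are routine, so I do not expect any serious obstacle beyond organizing the induction cleanly.
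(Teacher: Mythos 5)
Your proof is correct and takes essentially the same approach as the paper: both delete the bad vertices one at a time via Lemma~\ref{lem:st}, observing that the link of each bad vertex $I$ in the current complex is a cone with apex the good vertex $I\cap(S\cup -S)$. The only difference is the bookkeeping order---you delete by nondecreasing $|I|$ while the paper deletes by nondecreasing $|I^{\pm}\cap S|$---and if anything your ordering makes the verification that the lower part of the link is a cone slightly cleaner.
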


\begin{proof}
For simplicity, we let $K=(K_{B_n})_S$ and  $K'=(K_{B_n})'_{S}$.
  We will show that we can eliminate stars of vertices in $K\setminus K'$, one by one, from $K$ to $K'$, without changing the homotopy type. First,
for any vertex $I$ of $K$, $I\cap S\neq\emptyset$. In addition, two  vertices $I$ and $J$ meet in $K$ if and only if $I\subset J$ or $J\subset I$.

Let $I$ be a vertex of $K\setminus K'$ such that $|I^{\pm}\cap S|=1$, say $I^{\pm}\cap S=\{x\}$.
Let $J$ be a vertex in $K$ such that $J^{\pm}=\{x\}$ and $J\subsetneq I$.
Take any  $L\in \Lk I$.
If $I\subset L$, then $J\subset L$, and so $L$ meets $J$.
Suppose that $L\subset I$. Then $L^{\pm}\cap S$ is a subset of $I^{\pm}\cap S$.
Since $L$ is a vertex of $K$, $L^{\pm}\cap S \neq \emptyset$.
Therefore $L^{\pm}\cap S=I^{\pm}\cap S=\{x\}=J^{\pm}$ and so  $J^{\pm} \subset L^{\pm}$.
Since $J^{\pm} \subset L^{\pm} \subset I^{\pm}$, $J\subset I$ and $L\subset I$, it follows that
$J\subset L$, and so $L$ meets $J$.

Hence, $\Lk I$ is contractible, and so $K$ is homotopy equivalent to $K\setminus\St  I$ by Lemma~\ref{lem:st}. By redefining $K:=K\setminus\St  I$ and repeating the argument, we can conclude that the star of any vertex $I$ with $|I^{\pm}\cap S|=1$ can be eliminated.

Inductively, assume that we could eliminate all vertices $I\in K \setminus K'$ such that $|I^{\pm}\cap S|<j$, and let
 $K^*$ be the simplicial complex obtained by deleting stars of all those vertices, where $j\ge 2$.
Take a {smallest} vertex $I\in K^*\setminus K'$ such that $|I^{\pm}\cap S|=j$.
Let $J$ be a vertex in $K^*$ such that $I^{\pm}\cap S=J^{\pm}$ and $J\subset I$.
(Note that $J\in K'$ and so $J$ is in $K^*$ and $I\neq J$.)
Take any  $L\in \Lk I$ in $K^*$.
If $I\subset L$, then $J\subset L$, and so $L$ meets $J$.
Suppose that  $L\subset I$. Then $L^{\pm}\cap S$ is a subset of $I^{\pm}\cap S=J^{\pm}$.
If $|L^{\pm}\cap S|<j$, then such $L$ {should have already been deleted} by our induction hypothesis.
Thus $|L^{\pm}\cap S|=j$ and so  $L^{\pm}\cap S=J^\pm$.
Therefore $J^{\pm}\subset L^{\pm}$.
Since $J\subset I$ and $L\subset I$, we have $J\subset L$, and so $L$ meets $J$.

Hence, $\Lk I$ is contractible, and so $K^*$ is homotopy equivalent to $K^*\setminus\St I$ by Lemma~\ref{lem:st}.
        By redefining $K^*:=K^*\setminus\St I$ and repeating the argument,
        we can conclude that the star of any vertex $I$ with $|I^{\pm}\cap S|=j$ can be eliminated in increasing order of the size $|I^{\pm} \cap S|$.
\end{proof}

\begin{lemma} \label{lemma:K_B_S}
Let $r = |S|$. Then,  $ (K_{B_n})_S$ is homotopy equivalent to $K_{B_r}^{odd}$.
\end{lemma}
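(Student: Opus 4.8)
The plan is to deduce this from the already-proved Lemma~\ref{lem:deletion_B} by exhibiting an explicit simplicial isomorphism. Fix $S\subset[n]$ with $|S|=r$; we may assume $r\ge 1$ and $n\ge 3$, since for $n\le 2$ (or $S=\emptyset$) the statement is immediate, e.g.\ from Example~\ref{example:B_2}. By Lemma~\ref{lem:deletion_B}, $(K_{B_n})_S$ is homotopy equivalent to the induced subcomplex $(K_{B_n})'_S$ obtained by deleting all vertices $I$ with $I^\pm\not\subset S$. Thus it suffices to produce a simplicial isomorphism $(K_{B_n})'_S\cong K_{B_r}^{odd}$.

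First I would identify the vertices of $(K_{B_n})'_S$ explicitly. A vertex of $(K_{B_n})_S$ is a nonempty $I\subset[\pm n]$ satisfying \eqref{eq:star-condition} with $|S\cap I^\pm|$ odd; the extra condition $I^\pm\subset S$ is equivalent to $I\subset S\cup(-S)$ and forces $S\cap I^\pm=I^\pm$, while $|I^\pm|=|I|$ for any $I$ satisfying \eqref{eq:star-condition}. Hence the vertices of $(K_{B_n})'_S$ are precisely the nonempty $I\subset S\cup(-S)$ satisfying \eqref{eq:star-condition} with $|I|$ odd, and its simplices are exactly the nested chains among such $I$'s, since the simplices of $K_{B_n}$ --- and hence of any induced subcomplex --- are exactly the nested chains of its vertices.

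Next I would relabel coordinates: let $\phi\colon S\to[r]$ be the order-preserving bijection and extend it to a bijection $\hat\phi\colon S\cup(-S)\to[\pm r]$ by $\hat\phi(-s)=-\phi(s)$. Since $\hat\phi$ is a bijection commuting with $x\mapsto -x$, it carries a nonempty $I\subset S\cup(-S)$ satisfying \eqref{eq:star-condition} with $|I|$ odd to a nonempty $\hat\phi(I)\subset[\pm r]$ satisfying \eqref{eq:star-condition} with $|\hat\phi(I)|=|I|$ odd, that is, to a vertex of $K_{B_r}^{odd}=(K_{B_r})_{[r]}$; and $\hat\phi^{-1}$ does the reverse, so $\hat\phi$ restricts to a bijection of vertex sets. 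As $\hat\phi$ is an inclusion-preserving bijection, $I_1\subset I_2\iff\hat\phi(I_1)\subset\hat\phi(I_2)$, so it sends nested chains to nested chains, giving the desired simplicial isomorphism $(K_{B_n})'_S\cong K_{B_r}^{odd}$. Chaining this with Lemma~\ref{lem:deletion_B} yields $(K_{B_n})_S\simeq(K_{B_n})'_S\cong K_{B_r}^{odd}$.

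I do not expect a genuine obstacle: all the homotopy-theoretic work is packaged in Lemma~\ref{lem:deletion_B}, and the rest is the bookkeeping observation that deleting the ``wrong'' vertices turns the parity-of-overlap condition into the parity-of-cardinality condition defining $K_{B_r}^{odd}$, after which an order-preserving (sign-respecting) relabelling of $S$ onto $[r]$ is visibly an isomorphism. The only place to be slightly careful is the identity $|I^\pm|=|I|$ for $I$ satisfying \eqref{eq:star-condition}, which is what makes ``$|S\cap I^\pm|$ odd'' collapse to ``$|I|$ odd'' once $I\subset S\cup(-S)$.
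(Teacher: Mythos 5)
Your proof is correct and takes essentially the same route as the paper, whose entire proof of this lemma is ``By Lemma~\ref{lem:deletion_B}, it clearly follows''; you have simply made explicit the bookkeeping the authors leave implicit, namely that the surviving vertices are the nonempty odd subsets of $S\cup(-S)$ satisfying \eqref{eq:star-condition} and that a sign-equivariant relabelling of $S$ onto $[r]$ gives a simplicial isomorphism with $K_{B_r}^{odd}$. The key observation that $|I^{\pm}|=|I|$ under \eqref{eq:star-condition}, which converts the parity-of-overlap condition into the parity-of-cardinality condition, is exactly the point that makes the paper's ``clearly'' legitimate.
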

\begin{proof}
By Lemma~\ref{lem:deletion_B}, it clearly follows.
\end{proof}

    \begin{theorem}
     The $i$th $\Q$-Betti number $\beta^i (X^\R_{B_n};\Q)$ of $X^\R_{B_n}$  is
     $$
        \beta^i (X^\R_{B_n};\Q) = \binom{n}{2i}b_{2i} + \binom{n}{2i-1}b_{2i-1}.
     $$
    Furthermore, their integral cohomologies of $X^\R_{B_n}$ are $p$-torsion free for all odd primes $p$.
    \end{theorem}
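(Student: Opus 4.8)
The plan is to combine Theorem~\ref{formula} with the homotopy‐type computations already assembled in this section. By Theorem~\ref{formula}, for any field $\k$ in which $2$ is invertible,
\[
    \beta^i(X^\R_{B_n};\k) = \sum_{S\subseteq[n]} \rank_\k \tilde H^{i-1}((K_{B_n})_S;\k),
\]
so everything reduces to understanding the reduced cohomology of the subcomplexes $(K_{B_n})_S$. The case $S=\emptyset$ contributes nothing. For nonempty $S$ with $|S|=r$, Lemma~\ref{lemma:K_B_S} tells us $(K_{B_n})_S\simeq K_{B_r}^{odd}$, and by Lemma~\ref{lem:B:shellable} together with the Euler characteristic computation from Lemma~\ref{lemma:comp_of_mobius}, $K_{B_r}^{odd}\simeq\bigvee^{b_r} S^{\lfloor (r-1)/2\rfloor}$ (interpreting $b_0=1$ and the wedge of one copy of $S^{-1}$, i.e.\ two points, appropriately for the degenerate small cases — I would check $r=1,2$ by hand as in Example~\ref{example:B_2}). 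Hence $\tilde H^{j}((K_{B_n})_S;\k)$ is free of rank $b_r$ when $j=\lfloor(r-1)/2\rfloor$ and $0$ otherwise; in particular it is $p$-torsion free for every odd prime $p$.

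Next I would collect the contributions by the value of $\lfloor(r-1)/2\rfloor$. A subset $S$ contributes to $\beta^i$ exactly when $i-1=\lfloor(|S|-1)/2\rfloor$, i.e.\ when $|S|\in\{2i-1,2i\}$, since $\lfloor(2i-2)/2\rfloor=\lfloor(2i-1)/2\rfloor=i-1$. There are $\binom{n}{2i-1}$ subsets of size $2i-1$, each contributing $b_{2i-1}$, and $\binom{n}{2i}$ subsets of size $2i$, each contributing $b_{2i}$. Summing gives
\[
    \beta^i(X^\R_{B_n};\k) = \binom{n}{2i}b_{2i} + \binom{n}{2i-1}b_{2i-1},
\]
which is independent of $\k$ (as long as $\operatorname{char}\k\neq 2$). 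Taking $\k=\Q$ yields the Betti number formula.

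For the torsion statement, I would invoke the remark following Theorem~\ref{formula}: if the reduced cohomology of every $P_S$ (equivalently every $(K_{B_n})_S$) is $p$-torsion free for all odd primes $p$, then so is the integral cohomology of $X^\R_{B_n}$. Since each $(K_{B_n})_S$ is homotopy equivalent to a wedge of spheres, its integral reduced cohomology is free abelian, hence $p$-torsion free for every prime $p$, and the conclusion follows.

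The main obstacle is really bookkeeping at the boundary: making sure the shellability/homotopy‐type description of $K_{B_r}^{odd}$ and the value of $b_r$ are correct for the smallest values of $r$ (where the "sphere" dimension $\lfloor(r-1)/2\rfloor$ can be $0$ or the complex can be a wedge of $S^0$'s, so that $\tilde H^{-1}$ does not enter and the indexing $i-1$ must be read carefully), and that the conventions match those in Lemma~\ref{lemma:comp_of_mobius} and Example~\ref{example:B_2}. Once the small cases are checked, the general argument is just the substitution above. I would also remark that, strictly speaking, Lemma~\ref{lem:deletion_B} is stated for $n\ge 3$, so the cases $n=1,2$ should be verified directly — for $n=2$ this is exactly Example~\ref{example:B_2}, and $n=1$ is immediate.
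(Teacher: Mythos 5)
Your proposal is correct and follows essentially the same route as the paper: apply Theorem~\ref{formula}, identify each $(K_{B_n})_S$ with $K_{B_r}^{odd}\simeq\bigvee^{b_r}S^{\lfloor(r-1)/2\rfloor}$ via Lemma~\ref{lemma:K_B_S}, and collect the contributions from $|S|\in\{2i-1,2i\}$, with the torsion-freeness coming from the fact that wedges of spheres have free integral cohomology. Your extra attention to the small cases $r=1,2$ is a reasonable refinement but does not change the argument.
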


\begin{proof}
    Let $S \subset [n]$ and assume that $|S|=r$. By Lemma~\ref{lemma:K_B_S}, $(K_{B_n})_S \cong K_{B_r}^{odd}$.
    We recall that $(K_{B_r})_S \cong \bigvee^{b_r} S^{\lfloor \frac{r-1}{2} \rfloor}$. Hence, the topology type of $(K_{B_n})_S$ only depends on the {cardinality} of $S$. For a fixed $i$ and a field $\k$ with characteristic is not equal to $2$, by Theorem~\ref{formula},
$$
    \beta^i (X^\R_{B_n};\k) = \sum_S \beta^{i-1} \tilde{\beta}((K_{B_n})_S;\k) = \sum_{r=0}^n \delta_{i-1, \lfloor \frac{r-1}{2} \rfloor} \binom{n}{r}  b_r,
$$ where $\delta_{i,j}=1$ if $i=j$ and 0 otherwise. It proves the theorem.
\end{proof}

\bigskip

\section*{Acknowledgement}
The authors thank to Professor Soojin Cho for helpful discussions, and Professor Jang Soo Kim for suggesting nice proof of Lemma~\ref{lemma:comp_of_mobius}. They are also thankful to the anonymous referee for the thorough reading and kind comments. The first named author was supported by Basic Science Research Program through the National Research Foundation of Korea(NRF) funded by the Ministry of Science, ICT \& Future Planning (NRF-2012R1A1A2044990). 
The second named author was supported by Basic Science Research Program through the National Research Foundation of Korea (NRF) funded by the Ministry of  Science, ICT \& Future Planning (NRF-2015R1C1A1A01053495).

\bibliographystyle{amsplain}
\providecommand{\MR}{\relax\ifhmode\unskip\space\fi MR }
\providecommand{\MRhref}[2]{%
  \href{http://www.ams.org/mathscinet-getitem?mr=#1}{#2}
}
\providecommand{\href}[2]{#2}

\end{document}